\newcommand{\tsk}[1]{\textcolor{YellowOrange}}
\def\@endtheorem{\endtrivlist}
\newenvironment{customthm}[1]
  {\innercustomthm}
  {\endinnercustomthm}
\newtheorem{teo}{Theorem}[section]
\newtheorem*{teo*}{Theorem} 
\newtheorem{defin*}{Definition}
\newtheorem{prop}[teo]{Proposition}
\newtheorem*{prop*}{Proposition} 
\newtheorem*{cor*}{Corollary} 
\newtheorem{lemma}[teo]{Lemma}
\theoremstyle{definition}
\newtheorem{remark}[teo]{Remark}
\newtheorem{ex}{Example}
\newtheorem*{conj*}{Conjecture} 
\newtheoremstyle{dico}
{\baselineskip}   
{\topsep}   
{}  
{0pt}       
{} 
{.}         
{5pt plus 1pt minus 1pt} 
{}          
\theoremstyle{dico}
\newtheorem{say}[teo]{}
\numberwithin{equation}{section}
\newcommand{\ra}{\rightarrow}
\newcommand{\C}{\mathbb{C}}
\newcommand{\R}{\mathbb{R}}
\newcommand{\Zeta}{{\mathbb{Z}}}
\newcommand{\QQ}{{\mathbb{Q}}}
\newcommand{\End}{\operatorname{End}}
\renewcommand{\phi}{\varphi}
\newcommand{\Gl}{\operatorname{GL}}
\newcommand{\PP}{\mathbb{P}^1}   
\newcommand{\V}{\mathbb{V}}
\newcommand{\sieg}{\mathfrak{S}}
\newcommand{\Hg}{\operatorname{Hg}}
\newcommand{\U}{\operatorname{U}}
\newcommand{\SU}{\operatorname{SU}}
\newcommand{\Sp}{\operatorname{Sp}}
\newcommand{\Mg}{\mathcal{M}_g}
\newcommand{\A}{\mathcal{A}}
\newcommand{\Z}{\mathsf{Z}}
\newcommand{\M}{\mathsf{M}}
\newcommand{\ag}{\mathcal{A}_g}
\newcommand{\mg}{\mathcal{M}_g}
\newcommand{\B}{\mathsf{B}}
\newcommand{\mihi}[1]{}
\newcommand{\Mon}{\operatorname{Mon}}
\newcommand{\Hdg}{\operatorname{Hg}}
\begin{document}

\pagestyle{myheadings}

\title{Decomposable abelian $G$-curves and special subvarieties}


\author{Irene Spelta}
\address{Institut f\"ur Mathematik, Humboldt-Universit\"at zu Berlin, Unter den Linden 6, 10099 Berlin,
Germany.}
\email{irene.spelta@hu-berlin.de}
\author{Carolina Tamborini}
\address{Essener Seminar f\"ur Algebraische Geometrie und Arithmetik of Universit\"at Duisburg-Essen, Thea-Leymann-Str. 9 45127 Essen, Germany.}
\email{carolina.tamborini@uni-due.de}

\thanks{\textit{2020 Mathematics Subject Classification}. 14H10, 14H40, 14G35, 14K05.\\
	The authors are members of GNSAGA (INdAM) and are partially supported by INdAM-GNSAGA project CUP E55F22000270001. I. Spelta is partially supported by the Catalan research project SGR 00697, while C. Tamborini is partially supported by the Dutch Research Council NWO and by DFG-Research Training Group 2553
	“Symmetries and classifying spaces: analytic, arithmetic, and derived".}

\begin{abstract}
We consider families of abelian Galois coverings of the line.
When the Jacobian of the general element is totally decomposable, i.e., is isogenous to a product of elliptic curves, we prove that they yield special subvarieties of $\A_g$ if and only if a numerical condition holds, which in the general case is only known to be sufficient. 

\end{abstract}
\maketitle

\section{Introduction}
\begin{say}
	Let $\A_g$ be the moduli space of principally polarized abelian varieties of dimension $g$ over $\C$. The \emph{special} subvarieties of $\A_g$ are defined as Hodge loci for the natural variation of Hodge structure over $\QQ$ on $\A_g$, whose fiber over $A$ is $H^1(A, \QQ)$. The moduli space $\A_g$ is the quotient of the Siegel space $\sieg_g$ by the action of $\Sp(2g, \Zeta)$.  We denote by $\pi: \sieg_g \rightarrow \A_g$ the natural projection map. An algebraic subvariety $Z\subset \A_g$ is \emph{totally geodesic} if $Z=\pi(X)$ for some (connected) totally geodesic submanifold $X\subset \sieg_g$. We say that $X$ is the symmetric space \emph{uniformizing} $Z$. Special subvarieties of $\A_g$ are totally geodesic. Moreover, a totally geodesic subvariety $Z$ of $\A_g$ is special if and only if it contains a complex multiplication (CM) point (\cite{mumford-Shimura, moonen-linearity-1}). Let $\Mg$ denote the moduli space of smooth, complex algebraic curves of genus $g$ and consider the Torelli map $j: \Mg \ra \A_g$. By the Torelli theorem, the map $j$ is injective. A well known conjecture, due to Coleman and Oort, studies the interplay between the image of the Torelli map and the special subvarieties of $\A_g$. Specifically, it predicts that, for $g\gg0$, there are no positive dimensional special subvarieties $Z$ of $\A_g$ such that $Z\subset \overline{j(\Mg)}$ and $Z\cap j(\Mg)\neq \emptyset$.
\end{say}

\begin{say}\label{fam}
For low genus there exist \emph{counterexamples} to the Coleman-Oort conjecture, i.e. there exist examples of (positive-dimensional) special subvarieties $Z$ of $\A_g$ such that $Z\subset \overline{j(\Mg)}$ and $Z\cap j(\Mg)\neq \emptyset$. These have been object of study of many authors, we mention e.g., \cite{dejong-noot, fgp, fpp, fgs, gm1, mohajer-zuo-paa, moonen-special, moonen-oort, ire}. All the examples known so far are in genus $g\leq 7$ and are obtained via families of Galois covers of curves. The idea of the main construction is the following. Let $G$ be a finite group and let $\mathcal{C}\rightarrow \B$ be the family of all Galois covers $C_t\rightarrow C'_t=C_t/G$, where the genera $g=g(C_t)$, $g'=g(C_t')$, the ramification and the monodromy are fixed. Let $\M$ be the image in $\Mg$ of $\B$ and let $\Z:=\overline{j(\M)}$. Let $S(G)\subset \A_g$ be the PEL special subvariety associated with $G$. Clearly we have $\Z\subseteq S(G)$. In \cite{moonen-special} in the case $g'=0$ and $G$ cyclic and in \cite{fgp, fpp} for any $G$ and $g'$, it is proven that if the numerical condition
	\begin{gather}\tag{$\star$}\label{star}
		\dim \Z= \dim H^0(C, 2K_{C})^G=\dim (S^2H^0(C, K_{C}))^G=\dim S(G)
	\end{gather}
	holds true for a general $[C]\in \M$, then $\Z$ is a special subvariety of $\A_g$. Clearly, $\Z$ lies in the Torelli locus and meets the open Torelli locus non-trivially. Thus, under the condition \eqref{star}, $\Z$ provides a counterexample to the conjecture. 
\end{say}

\begin{say}\label{casi trattati} 
	It is a natural problem to understand whether \eqref{star} is also necessary for a family of $G$-covers to yield a special subvariety. We briefly recall what is known about this problem. In \cite{moonen-special} Moonen proved the necessity of condition \eqref{star} when $g'=0$ and the group $G$ is cyclic.  His proof relies on deep results in positive characteristics. Making use of similar techniques, Mohajer and Zuo \cite{mohajer-zuo-paa} extended this to the case where $g'=0$, the group $G$ is abelian and the family is one-dimensional. Finally, still using the reduction modulo $p$ technique, Mohajer proved in \cite{Moha} the necessity of \eqref{star} for $g'=0$, $G$ abelian, plus some extra condition on the monodromy of the family. A Hodge theoretic argument is given in \cite[Prop. 5.7]{cfg}. Here, Colombo, Frediani, and Ghigi proved that \eqref{star} is necessary for $\Z$ to be totally geodesic in the case $g'=0$, $G$ cyclic, plus another condition on the dimension of the eigenspaces for the representation of $G$ on $H^0(C, K_C)$. In \cite{f}, Frediani generalized this result to the case $g'=0$ and $G$ abelian with the analogous condition on the eigenspaces. 
\end{say}

\begin{say}
	In this work, we consider the case where $G$ is abelian and the Jacobian of the general element of the family is totally decomposable, i.e. it is isogenous to a product of elliptic curves. Our main result is the following:

\begin{customthm}{A}\label{ThmA}
	Let $\mathcal{C}\rightarrow \B$ be a totally decomposable family of abelian coverings of the line, then $\Z$ is special if and only if \eqref{star} holds.
\end{customthm}

We stress that, by results of Mohajer and Zuo, the necessity of \eqref{star} was already known in the abelian case for one-dimensional families with $g'=0$. Our result allows to take into account totally decomposable abelian coverings of the line of any dimension. {Also, note that families of totally decomposable abelian $G$-covers do not fall within the aforementioned cases already discussed by Mohajer \cite{Moha} and Frediani \cite{f}. In this way, our result goes in the direction of completing the proof of the necessity of \eqref{star} in the abelian case.}
The result fits into a more general approach to the Coleman-Oort conjecture for totally decomposable special subvarieties. This was first addressed by Moonen and Oort in \cite[Question 6.6]{moonen-oort}. Moreover, Lu and Zuo in \cite{lz} showed  that for $g>11$, there do not exist special curves such that the Jacobian of the generic point decomposes as $E^k$. 
\end{say}

\begin{say}
The work is organized as follows: in Section \ref{Gcurves} we review the basics on families of $G$-curves. Section \ref{smallest special} is devoted to the description of the generic Hodge group $\Hg$ of a family of abelian coverings of the line. This is fundamental because the orbit of the generic Hodge group provides the smallest special subvariety $S_f$ of $\A_g$ containing the family and the problem of understanding whether the condition \eqref{star} is necessary relies exactly on understanding the inclusions $\Z\subseteq S_f\subseteq S(G)$. In this setting, we characterize the relation between the special subvarieties $S_f$ and $S(G)$ when the symmetric space uniformizing $S_f$ is isomorphic to a product of $\sieg_1$'s (which is the case for totally decomposable families). Indeed, we prove the following:
\begin{prop}\label{graph0}
	Let $\mathcal{C}\rightarrow \B$ be a family of abelian $G$-coverings of the line. Suppose that $\Z=S_f$ and that the uniformizing symmetric space $M_f$ of $S_f$ is isomorphic to a product of $\sieg_1$'s. Then the same is true for the unifomizer $M(G)$ of $S(G)$, namely we have:
	 \begin{gather}
	 	M_f\cong\underbrace{\sieg_1\times\dots \times\sieg_1}_{r \text{ times}} \hookrightarrow \underbrace{\sieg_1\times\dots \times\sieg_1}_{n \text{ times}}\cong M(G)
	 \end{gather}
	with $n\geq r$. Moreover, $\Z=S_f\subsetneq S(G)$ if and only if one of the $\sieg_1$'s in $M_f$ maps into $M(G)$ as the graph of an isometry $h:\sieg_1\ra \sieg_1$
			 \begin{gather*}
		M_f \supseteq \sieg_1 \hookrightarrow  \sieg_1\times \sieg_1\subseteq M(G) \\
	\tau \longmapsto (\tau, h(\tau)).
	\end{gather*}
	
		
\end{prop}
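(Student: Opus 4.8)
The plan is to derive both assertions from the $G$-eigenspace decomposition of the two uniformizers recalled in Section~\ref{smallest special}, and then to reduce the ``moreover'' part to a classification of totally geodesic holomorphic maps between polydisks. Recall that the $G$-action splits the weight-one Hodge structure into eigenspaces, so that $M(G)$ factors as $\prod_{\{\chi,\bar\chi\}}\sieg(n_\chi,n_{\bar\chi})$ over the conjugate pairs of characters, with $n_\chi=\dim H^0(C,K_C)_\chi$; each pair with $n_\chi,n_{\bar\chi}\ge1$ (the \emph{active} pairs) contributes an irreducible Hermitian symmetric domain of dimension $n_\chi n_{\bar\chi}$, the pairs with $n_{\bar\chi}=0$ contribute CM points, and real characters contribute Siegel factors $\sieg_{n_\chi}$ treated identically. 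The generic Hodge group $\Hg_f\subseteq\Hg(G)$ preserves this decomposition, so $M_f\hookrightarrow M(G)$ is a totally geodesic holomorphic embedding compatible with the factorizations. First I would show that every active factor is $\sieg_1$, i.e. $n_\chi=n_{\bar\chi}=1$: the $\{\chi,\bar\chi\}$-isotypic part of the generic Jacobian carries multiplication by the field $\QQ(\chi)$ and, since $M_f$ is a product of $\sieg_1$'s (total decomposability), is isogenous to a product of elliptic curves; combining the $\QQ(\chi)$-action with the prescribed signature, and using the description of $\Hg(G)$ and $\Hg_f$ from Section~\ref{smallest special}, forces every active signature to be $(1,1)$. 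As $\sieg(a,b)$ (resp.\ $\sieg_h$) is a product of copies of $\sieg_1$ only when $a=b=1$ (resp.\ $h=1$), this gives $M(G)\cong\sieg_1\times\cdots\times\sieg_1$ ($n$ factors), and $n\ge r$ because $M_f=\prod^r\sieg_1$ embeds.

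For the ``moreover'' the embedding is now a totally geodesic holomorphic map $\Phi\colon\prod_{j=1}^r\sieg_1\hookrightarrow\prod_{i=1}^n\sieg_1$ coming from an inclusion $\prod^r\operatorname{SL}_2(\R)\hookrightarrow\prod^n\operatorname{SL}_2(\R)$ modulo centres. The key lemma I would establish is that, after permuting factors, each target projection $\operatorname{pr}_i\circ\Phi$ either is constant or factors as the projection to a single source factor $\sieg_1^{(\sigma(i))}$ followed by a holomorphic isometry $h_i\in\Aut(\sieg_1)=\operatorname{PSL}_2(\R)$. At the Lie-algebra level this is the statement that a homomorphism $\mathfrak{sl}_2^{\oplus r}\to\mathfrak{sl}_2^{\oplus n}$ respecting the Cartan decompositions sends each simple ideal isomorphically onto one target ideal or to zero; geometrically it says that a holomorphic totally geodesic polydisk inside a polydisk is a multi-graph of M\"obius transformations, which follows from the de~Rham decomposition of the Bergman metric.

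Finally I would read off the dichotomy. Injectivity of $\Phi$ shows each source factor occurs as some $\sigma(i)$, so $\sigma$ is surjective onto $\{1,\dots,r\}$; granting that no target factor is constant on $M_f$, $\sigma$ is defined on all of $\{1,\dots,n\}$, hence $\sigma\colon\{1,\dots,n\}\twoheadrightarrow\{1,\dots,r\}$ is a surjection. It is a bijection exactly when $n=r$, i.e.\ when $M_f=M(G)$; whereas if $n>r$ it fails to be injective and some source factor $\sieg_1^{(j)}$ maps into a product $\sieg_1^{(i_1)}\times\sieg_1^{(i_2)}$ by $\tau\mapsto(h_{i_1}(\tau),h_{i_2}(\tau))$, which after the isometric change of variable $\tau\mapsto h_{i_1}(\tau)$ is precisely the graph of the isometry $h:=h_{i_2}\circ h_{i_1}^{-1}$, the configuration in the statement. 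Since $\Z=S_f\subseteq S(G)$ are special subvarieties uniformized by $M_f\subseteq M(G)$, strict inclusion of the uniformizers is equivalent to a drop in dimension and hence to $S_f\subsetneq S(G)$, which yields the equivalence.

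I expect the hard part to be the clause used above that \emph{no target factor is constant on $M_f$}. A priori the family could sit at a CM point of an active $\sieg_1$-factor, which would force $S_f\subsetneq S(G)$ with no graph present and break the equivalence. To exclude this I would use that $\mathcal{C}\to\B$ is the full family of covers with fixed data, whose monodromy acts nontrivially on every eigenspace $H^1_\chi$ with $n_\chi,n_{\bar\chi}\ge1$: by the theorem of the fixed part the connected monodromy group is normal in $\Hg_f$, so nontriviality in the $\chi$-eigenspace forces the projection of $M_f$ to the corresponding one-dimensional factor to be surjective rather than constant. Establishing this nontriviality in each active eigenspace (or citing the computation of $\Hg_f$ in Section~\ref{smallest special}) is the delicate point; the polydisk classification, by contrast, is routine once set up at the Lie-algebra level.
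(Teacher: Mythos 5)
Your proposal is correct and follows essentially the same route as the paper: the surjectivity of the eigenspace projections of the monodromy/Hodge group from Section \ref{smallest special} (Proposition \ref{riassuntino}) forces every active factor $\SU(m_\chi,m_{\bar\chi})$ to be $\SU(1,1)$, and the classification of totally geodesic polydisks inside polydisks (the paper's Lemma \ref{graficosiegel}, which you state in its multi-graph form) yields the graph dichotomy, with your worry about constant target factors being exactly what the surjectivity in Proposition \ref{riassuntino}(2) rules out. The one mis-step is the parenthetical inference that $M_f$ being a product of $\sieg_1$'s makes the isotypic pieces isogenous to products of elliptic curves --- this is false in general (abelian surfaces with quaternionic multiplication also have uniformizer $\sieg_1$) --- but it is not load-bearing, since the signature claim $m_\chi=m_{\bar\chi}=1$ already follows from the surjectivity you cite.
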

(See Proposition \ref{graph}). With similar techniques, along with this Proposition, we also prove the following related result of independent interest.

\begin{teo}\label{tutti diversi0}
   	Let $\mathcal{C}\rightarrow \B$ be a family of abelian $G$-coverings of the line. Suppose that 
	\begin{gather*}
		\SU(m_{\chi_i}, m_{\bar\chi_i})\neq \SU(m_{\chi_j}, m_{\bar\chi_j})
	\end{gather*}
	for all the irreducible complex representations $\chi_i, i\neq j$ of $G$, with multiplicities $m_{\chi_i} m_{\bar {\chi}_i}\neq 0$ on the cohomology of the fibers. Then $S_f=S(G)$. In particular, $\Z$ is special if and only if \eqref{star} holds. 
\end{teo}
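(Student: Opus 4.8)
The plan is to reduce the statement to a purely group-theoretic fact about subgroups of products of simple Lie groups, namely Goursat's lemma, and then transfer the conclusion back to the symmetric spaces. Recall from Section \ref{smallest special} that, for a family of abelian $G$-coverings of the line, the isotypical decomposition of $H^1$ of the fibers groups the complex irreducible characters $\chi$ of $G$ into conjugate pairs $\{\chi,\bar\chi\}$, and that the semisimple part of the PEL group of $S(G)$ is the product $\prod_i \SU(m_{\chi_i},m_{\bar\chi_i})$, taken over the pairs with $m_{\chi_i}m_{\bar\chi_i}\neq 0$ (the remaining characters contribute only compact factors, irrelevant to the symmetric space), so that $M(G)$ is the product of the associated bounded symmetric domains. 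First I would record that the connected generic Hodge group $\Hg$ of the family sits, up to its central torus, inside $\prod_i \SU(m_{\chi_i},m_{\bar\chi_i})$, and that its projection to each factor is the generic Hodge group of the $\{\chi_i,\bar\chi_i\}$-isotypical sub-variation of Hodge structure, hence all of $\SU(m_{\chi_i},m_{\bar\chi_i})$; this surjectivity is exactly the content of the description carried out in Section \ref{smallest special}. Thus $\Hg^{\mathrm{der}}$ is a connected subgroup of $\prod_i G_i$, with $G_i:=\SU(m_{\chi_i},m_{\bar\chi_i})$ a simple factor, surjecting onto each $G_i$.

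The key step is then Goursat's lemma in its form for a product of finitely many simple Lie groups: a connected subgroup $H\subseteq\prod_i G_i$ surjecting onto every factor is, up to the centres, ``multidiagonal'', i.e.\ there is an equivalence relation on the index set such that $H$ is the fibre product over the blocks, each block contributing a single factor embedded as the graph of isomorphisms $G_i\xrightarrow{\sim}G_j$ among its members. In particular, two indices can lie in the same block only if $G_i$ and $G_j$ are isomorphic. This is precisely the graph phenomenon already isolated, in the rank-one case $G_i=\SU(1,1)$, in Proposition \ref{graph0}, where the gluing is realized by the graph of an isometry $h\colon\sieg_1\to\sieg_1$; the general case simply replaces isometries of the $\sieg_1$'s by isomorphisms of the bounded symmetric domains, equivalently by isomorphisms of the $G_i$.

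Now I would invoke the hypothesis. Since $\SU(p,q)\cong\SU(p',q')$ as real Lie groups (equivalently, inducing an isomorphism of the associated domains) if and only if $\{p,q\}=\{p',q'\}$, the assumption that the groups $\SU(m_{\chi_i},m_{\bar\chi_i})$ are pairwise distinct for distinct pairs $\{\chi_i,\bar\chi_i\}$ means that no two distinct factors $G_i$ are isomorphic. Hence every block of the equivalence relation is a singleton, so $\Hg^{\mathrm{der}}=\prod_i G_i$. Therefore the uniformizer $M_f$ of $S_f$ is the full product $M(G)$, and $S_f=S(G)$.

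Finally, for the ``in particular'' clause I would use the chain $\Z\subseteq S_f\subseteq S(G)$ together with the identity $\dim S(G)=\dim(S^2H^0(C,K_C))^G$. Since $S_f$ is the smallest special subvariety containing $\Z$, the variety $\Z$ is special if and only if $\Z=S_f$; having shown $S_f=S(G)$, this happens if and only if $\dim\Z=\dim S(G)$ (an inclusion of irreducible varieties of equal dimension being an equality), that is, exactly when \eqref{star} holds. I expect the main obstacle to lie in the first paragraph rather than in the combinatorial Goursat step: one must ensure that the projection of $\Hg$ onto each factor is genuinely surjective (so that Goursat's lemma applies) and that the centres of the $\SU(m_{\chi_i},m_{\bar\chi_i})$ do not interfere — it is to the adjoint groups, equivalently to the symmetric domains, that the graph description must be applied, and the non-isomorphism hypothesis must be read at that adjoint level.
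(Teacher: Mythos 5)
Your proposal is correct and follows essentially the same route as the paper: the paper establishes in \S 3 that $\Mon^{0,ad}_\R$ is a product of simple factors, each equal to some $\SU(m_\chi,m_{\bar\chi})$, embedded in $\prod_{m_\chi m_{\bar\chi}\neq 0}\SU(m_\chi,m_{\bar\chi})$ with surjective projections, and then observes that pairwise non-isomorphism of the factors forces this embedding to be an equality --- exactly your Goursat step --- before concluding by the dimension count $\dim S(G)\leq\dim S_f\leq\dim S(G)$. The only cosmetic difference is that you run the argument on $\Hg^{\mathrm{der}}$ directly (with surjectivity of the projections ultimately supplied, as in the paper, by the Mohajer--Zuo computation of $\Mon^0(\chi)$), whereas the paper works with $\Mon^{0,ad}_\R$ and passes to $\Hdg^{ad}_\R$ via the inclusion $\Mon^{0,ad}_\R\subseteq\Hdg^{ad}_\R$.
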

(See Theorem \ref{tutti diversi}). In Section \ref{proofmain} we consider families of totally decomposable abelian $G$-curves, and we prove the Theorem \ref{ThmA}. Indeed, by describing the Hodge group for a totally decomposable Jacobian, we show that, when the family is special, the graph-situation in Proposition \ref{graph0} can not occur, implying that, in this case, $S_f=S(G).$
\end{say}
\vspace{4 mm}

{\bfseries \noindent{Acknowledgements}} We heartily thank Alessandro Ghigi for interesting discussions and useful suggestions.

\section{Families of $G$-curves}\label{Gcurves}	
This section is devoted to recalling basic facts and properties of families of Galois coverings. In particular, we describe when they yield a special subvariety of $\A_g$. 	
\begin{say}\label{condizionestar}
	Let $G$ be a finite group and let $\mathcal{C}\rightarrow \B$ be the family of all Galois covers $C_t\rightarrow C'_t=C_t/G$, where the genera $g=g(C_t)$, $g'=g(C_t')$, the ramification, and the monodromy are fixed  (see \cite{ganzdiez, GT2} for details on the construction of such families). 
    Let $\M$ be the image in $\mg$ of $\B$ and let $\Z:=\overline{j(\M)}$. 	If $C_t$ is one of the curves in the family, the action of $G$ on $C_t$ induces an action of $G$ on holomorphic one-forms:
	\begin{gather}\label{azioneforme}
		\rho: G\ra \Gl(H^0(C_t, K_{C_t})), \quad \rho(g)(\omega)=g.\omega:=(g^{-1})^*(\omega).
	\end{gather}
	Notice that the equivalence class of $\rho$ does not depend on the point $t\in \B$. Since $G$ acts holomorphically on $C_t$, $\rho$ maps $G$ injectively into $\Sp(2g, \R)$. 
	In particular, $G$ is a group of isometries of $\sieg_g$ and, since $G$ is finite, $\sieg_g^G$ is non-empty. It follows that $\sieg_g^G$ is a smooth connected totally geodesic submanifold of $\sieg_g$. The image $S(G)$ of $\sieg_g^G$ in $\A_g$ is a special subvariety of $\A_g$ of PEL type (see \cite{fgp, fpp}). By construction, $S(G)$ contains $\Z$. The condition
	\begin{gather}\tag{$\star$}\label{star}
		\dim \Z=\dim S(G),
	\end{gather}
	implies that $\Z=S(G)$ and hence it ensures that $\Z$ is special in $\A_g$. In particular, it provides a counterexample to the Coleman-Oort conjecture.  The orbifold cotangent space of $\M$ in $x=[C_t]$ is $H^0(C_t, 2K_{C_t})^G$, while the orbifold cotangent space of $S(G)$ at $j(x)=[JC_t]$ is $T^*_{j(x)}(\sieg_g)^G=(T^*_{j(x)}\sieg_g)^G=(S^2H^0(C_t, K_{C_t}))^G$. It follows that we can read condition \eqref{star} as:
	\begin{gather}\tag{$\star$}\label{star}
		\dim H^0(C_t, 2K_{C_t})^G=\dim (S^2H^0(C_t, K_{C_t}))^G.
	\end{gather}
\end{say}

\begin{say}\label{storiellaesempi}
	Using condition \eqref{star}, various counterexamples to the conjecture have been obtained. These are in genus $g\leq 7$ and can be divided in two classes: 
	\begin{enumerate}
		\item those obtained as families of Galois covers satisfying \eqref{star};
		\item those obtained via fibrations constructed on the examples in $(1)$ with $g'=1$ (see \cite{fgs}).
	\end{enumerate}
	It is proved in \cite{fgs} that there are no families of Galois covers of curves with $g'\geq 2$ satisfying $(\star)$. Table 2 in \cite{fgp} lists all known examples of type $(1)$ with $g'=0$. Table 2 in \cite{fpp} lists all known examples of type of type $(1)$ with $g'=1$. Actually, in \cite{cgp}, the authors show that the ones listed in \cite{fgp, fpp} are the only positive-dimensional families of Galois coverings satisfying $(\star)$ with $2\leq g\leq 100$. In \cite{ct} the symmetric spaces uniformizing each of these counterexamples have been computed. Of a different nature are the two examples in \cite{ire}, obtained via non-Galois coverings.
	
\end{say}

\begin{say}\label{sfSG}
	Of course, the condition \eqref{star} is a priori just a sufficient condition: when $\dim \Z < \dim S(G)$ one can't conclude that $\Z$ is not special. It may happen that $\Z$ is a smaller special subvariety contained in $S(G)$. In the following we will denote by $S_f$ the smallest special subvariety containing $\Z$. We thus have $\Z\subseteq S_f \subseteq S(G)$ and $\Z$ is special if and only if $\Z=S_f$. To prove the necessity of \eqref{star} means to prove that $\Z$ is special if and only if $\Z= S_f = S(G)$.
\end{say}

\section{Hodge group and smallest special subvariety}\label{smallest special}
In this section we give a summary of the known results about the generic Hodge group of a family of abelian $G$-curves.
\begin{say}
	Let $G$ be a finite abelian group. As in \ref{condizionestar}, consider the family $f:\mathcal{C}\rightarrow \B$ of all Galois covers $C_b\rightarrow C'_b=C_b/G$ with fixed genera $g=g(C_b)$, $g'=g(C_b')$, ramification and monodromy. Let $\V= R^1f_*\mathbb{Q}$ be the local system of the natural $\mathbb{Q}$-VHS associated with the family, and $b\in B$ be a Hodge-generic point with respect to the variation $\V$. The $\QQ$-VHS comes equipped with a $G$-action. Let $X(G)$ denote the set of $\mathbb{C}$-irreducible representations of $G$. We have the following decomposition in sublocal systems
	\begin{gather*}
		\V_{\mathbb{C}}=\oplus_{\chi \in X(G)}  \V_{\chi},
	\end{gather*}
	where $\V_{\chi}$ denotes the $\chi$-isotypical summand over $\mathbb{C}$. At the point $b$, we have that $\V_b=H^1(C_b, \QQ)$. After tensoring with $\C$, we have $H^1(C_b, \C)=H^{1,0}(C_b)\oplus H^{0,1}(C_b)$. In particular, the first term decomposes as \begin{equation}\label{H0kC}
	    H^{1,0}(C_b)=  \oplus_{\chi \in X(G)}  m_{\chi} V_{\chi},
	\end{equation} where $V_{\chi}$ is the $\C$-irreducible representation associated with $\chi$ and $m_{\chi}$ its multiplicity. Note that $\V_{\chi, b}=H^1(C_b, \C)_\chi=m_{\chi}V_{\chi}\oplus\overline{m_{\chi}V_{\chi}}= m_{\chi}V_{\chi}\oplus{m_{\bar\chi}V_{\bar\chi}}$. Similarly, let $X(G, \R)$ denote the set of $\mathbb{R}$-irreducible representations of $G$. For $\chi\in X(G)$, we denote by $(\chi, \overline{\chi})$ the real representation associated with it. The analogous decomposition with real coefficients is:
	\begin{gather*}
		\V_{\mathbb{R}}=\bigoplus_{(\chi, \overline{\chi})\in X(G, \R), } \V_{(\chi,\overline{\chi})} .
	\end{gather*} 
	Where, if $\chi\neq \overline{\chi}$, the relation is
	\begin{gather*}
		\V_{(\chi,\overline{\chi})} \otimes_{\R} \C= \V_{\chi} \oplus \V_{\overline{\chi}}
	\end{gather*}
	while if $\chi=\overline{\chi}$, we have
	\begin{gather*}
		\V_{(\chi,\chi)} \otimes_{\R} \C= \V_{\chi}.
	\end{gather*}
	When $\chi\neq \overline{\chi}$, on the summand $\V_{(\chi,\overline{\chi})}$, the polarization of the VHS induces an hermitian form of signature $(m_\chi, m_{\overline{\chi}})$. In the case $\chi=\overline{\chi}$ the polarization induces a symplectic form on $\V_{(\chi,\chi)}$ (see \cite[Corollary 2.21-2.23]{dm}). 
	
	Let $\Mon^0\subset \Gl(H^1(C_b, \QQ))$ and $\Hdg \subset \Gl(H^1(C_b, \QQ))$ be the connected monodromy group and the Hodge group, respectively. In particular, $\Hdg$ is the generic Hodge group of the VHS. By Andr\'e \cite[Theorem 1]{andre}, $\Mon^0$ is a semi-simple normal subgroup of $\Hdg$. We have $\Mon^0\subset \Hdg\subset \Sp(H^1(C_b, \QQ))^G$, where $\Sp(H^1(C_b, \QQ))^G$ is the centralizer of $G$ in $ \Sp(H^1(C_b, \QQ))$. Extending scalars to $\R$ there are the following isomorphisms:
	\begin{gather}\label{decomposizione SG}
		\Sp(H^1(C_b, \R))^G \simeq \prod_{(\chi, \bar\chi )\in X(G, \R)} \Sp(H^1(C_b, \R)_\chi)^G\simeq  \prod_{(\chi, \bar\chi ), \chi\neq\bar\chi} \U(m_\chi, m_{\bar \chi})\times\prod_{(\chi, \bar\chi ), \chi=\bar\chi}\Sp(2m_\chi, \R).
	\end{gather}
	We denote by $\Mon^0(\chi)$ the projection of $\Mon^0_\R$ onto the $\chi$-factor of the decomposition.  In the case of abelian Galois coverings of $\mathbb{P}^1$, Mohajer and Zuo (extending results in \cite{moonen-special,rohde}) proved the following:
\end{say}
\begin{prop}\cite[Lemma 6.4]{mohajer-zuo-paa}\label{mohajerzuo}
	If $m_\chi m_{\bar \chi}\neq 0$, then $\Mon^0(\chi)=\SU(m_\chi, m_{\bar \chi})$. Moreover, if $\chi=\bar \chi$, then $\SU(m_\chi, m_{ \chi})=\Sp_{2m_\chi}.$
\end{prop}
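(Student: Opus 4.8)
The plan is to establish the two inclusions $\Mon^0(\chi)\subseteq\SU(m_\chi,m_{\bar\chi})$ and $\Mon^0(\chi)\supseteq\SU(m_\chi,m_{\bar\chi})$ separately, reducing the whole computation to the case of cyclic covers, where the statement is due to Moonen and Rohde. For the inclusion ``$\subseteq$'' I would start from the fact that monodromy preserves the polarization and commutes with the $G$-action, so that $\Mon^0\subseteq\Sp(H^1(C_b,\R))^G$. Projecting to the $\chi$-factor of the decomposition \eqref{decomposizione SG} lands $\Mon^0(\chi)$ inside $\U(m_\chi,m_{\bar\chi})$, resp. $\Sp(2m_\chi,\R)$ when $\chi=\bar\chi$. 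By André's theorem (cited above) $\Mon^0$ is semisimple, hence so is its image $\Mon^0(\chi)$, which is moreover connected. Since there is no nonconstant homomorphism from a semisimple group to the torus $\U(m_\chi,m_{\bar\chi})/\SU(m_\chi,m_{\bar\chi})\cong\U(1)$, the determinant of $\Mon^0(\chi)$ is trivial and therefore $\Mon^0(\chi)\subseteq\SU(m_\chi,m_{\bar\chi})$. In the self-dual case $\Sp(2m_\chi,\R)$ is already semisimple and no further reduction is needed.

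To reduce the reverse inclusion to cyclic covers I would exploit that $G$ is abelian: writing $K=\ker\chi$, the quotient $G/K$ is cyclic and $\chi$ descends to a faithful character of $G/K$. As $K$ acts trivially on $\V_\chi$, one has $H^1(C_b,\C)_\chi=H^1(C_b/K,\C)_\chi$, so the eigen-local-system $\V_\chi$ depends only on the intermediate cyclic cover $C_b/K\to\PP$. These intermediate covers fit into a family of cyclic coverings of the line over the same base and with the same branch configuration, and $\Mon^0(\chi)$ is unchanged. It thus suffices to prove $\Mon^0(\chi)=\SU(m_\chi,m_{\bar\chi})$ for cyclic covers.

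The reverse inclusion, i.e. the big-monodromy statement, is the main obstacle. Here the base is a (finite cover of the) configuration space of the branch points on $\PP$, the monodromy representation factors through its fundamental group, and the local monodromies coming from collisions of two branch points act on $\V_\chi$ as complex reflections, with eigenvalue a root of unity determined by $\chi$. After checking that $\V_\chi$ is irreducible as a local system, one invokes a big-monodromy theorem — an irreducible group generated by complex reflections is either finite or Zariski-dense in the full (special) unitary group — together with the fact that for a positive-dimensional family the monodromy is infinite. This forces the Zariski closure of the monodromy to contain $\SU(m_\chi,m_{\bar\chi})$, which being connected lies in $\Mon^0(\chi)$, giving the reverse inclusion. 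This is exactly the content of the results of Moonen and Rohde that Mohajer--Zuo extend, and it is the delicate point: verifying irreducibility and that the reflections generate the full group. The hypothesis $m_\chi m_{\bar\chi}\neq 0$ is what makes the Hermitian form genuinely indefinite, so that $\SU(m_\chi,m_{\bar\chi})$ is noncompact and the VHS nonconstant; if $m_\chi m_{\bar\chi}=0$ the eigenspace is pure and $\Mon^0(\chi)$ is trivial.

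Finally, in the self-dual case $\chi=\bar\chi$ the character takes values $\pm1$, the intermediate cover is a double cover, and the polarization on $\V_{(\chi,\chi)}$ is symplectic. The same scheme of argument, now yielding density in the symplectic group exactly as for the classical monodromy of families of double covers, gives $\Mon^0(\chi)=\Sp_{2m_\chi}$. This is the meaning of the identity ``$\SU(m_\chi,m_{\chi})=\Sp_{2m_\chi}$'' in the statement: in the self-dual situation the unitary factor of \eqref{decomposizione SG} is replaced by the symplectic one, and the monodromy fills it out.
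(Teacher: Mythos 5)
The paper does not prove this proposition: it is quoted verbatim from Mohajer--Zuo \cite[Lemma 6.4]{mohajer-zuo-paa}, so there is no internal proof to compare against. That said, your outline does reproduce the strategy of the cited proof. The containment $\Mon^0(\chi)\subseteq\SU(m_\chi,m_{\bar\chi})$ via $\Mon^0\subseteq\Sp(H^1(C_b,\QQ))^G$, projection to the $\chi$-factor of \eqref{decomposizione SG}, and semisimplicity (Andr\'e) killing the determinant is correct; and the reduction to cyclic covers by passing to $C_b/\ker\chi$, on which $\chi$ becomes a faithful character of the cyclic group $G/\ker\chi$ and $H^1(C_b,\C)_\chi$ is unchanged, is exactly how Mohajer--Zuo reduce to Moonen and Rohde.

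Two points deserve flagging. First, the intermediate cyclic family does not have ``the same branch configuration'': branch points whose local monodromy lies in $\ker\chi$ become unbranched in $C_b/\ker\chi\to\PP$, so the cyclic family lives over a smaller configuration space, and one must use that the forgetful map between configuration spaces induces a surjection on fundamental groups to identify the two monodromy groups acting on $\V_\chi$. Second, and more seriously, the entire content of the statement is the reverse inclusion for cyclic covers, and there you invoke a loosely stated ``big-monodromy theorem'' for groups generated by complex reflections in $\U(m_\chi,m_{\bar\chi})$. No such off-the-shelf dichotomy (finite or Zariski-dense in $\SU(p,q)$) can be applied without verifying irreducibility of $\V_\chi$ and ruling out intermediate Zariski closures; Moonen and Rohde in fact establish the density by induction on the number of branch points via degeneration arguments rather than by a reflection-group classification. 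As an outline deferring to the literature your proposal is acceptable---the paper itself defers the whole statement---but the step you yourself label as delicate is where all the mathematics lives, and it is not carried out.
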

In the proposition and in what follows, we use the slight imprecise notation $\SU(m_\chi, m_{ \chi})=\Sp_{2m_\chi}$, meaning that $\Sp_{2m_\chi}\otimes \C= \SU(m_\chi, m_{ \chi})$ and we are taking the real points.

\begin{say}
	{Let $S_f$ be the smallest special subvariety of $\ag$ that contains $\Z$ (defined as in \ref{condizionestar}). This is the special subvariety associated with $\Hdg$, i.e., it is the special subvariety whose uniformizer is the symmetric space associated with the real group $\Hdg_\R$. This follows from the general fact that special subvarieties in $\A_g$ are uniformized by the symmetric space associated with the generic Hodge group (see \cite[Section 3]{deba} for a thorough survey). More details on this are also recalled in the following section (see Theorem \ref{deba}).} Let $\Hdg_\R^{ad}= Q_1\times\cdots\times Q_s$ be the decomposition of the adjoint Hodge group $\Hdg_\R^{ad}$ in $\R$-simple groups.  If $Q_i$ is non-compact we denote by $\delta(Q_i)$ the dimension of the corresponding symmetric space. When $Q_i$ is compact we set $\delta(Q_i)=0$. Therefore $\dim S_f=\sum \delta(Q_i)$. Obviously, $\Z$ is special if and only if $\dim \Z=\dim S_f$. 
\end{say}

\begin{say}\label{vertaasoluta}
	As we previously observed, $\Mon^0$ is a normal subgroup of $\Hdg$. By semi-simplicity it follows that $\Mon^{0,ad}_\R= \prod_{i\in I} Q_i$, for $I\subseteq \{1,\dots, s\}$. 
    By the decomposition \eqref{decomposizione SG}, we have $\prod_{i\in I} Q_i=\Mon^{0,ad}_\R\subseteq \prod_\chi \Mon^{0,ad}(\chi) $. 
	In the case of a family of abelian coverings of the line, by Proposition \ref{mohajerzuo}, we have $\Mon^{0,ad}(\chi)=\SU(m_\chi, m_{\bar\chi})$. Therefore, for $i\in I$, since $Q_i$ is normal in  $\prod_{i\in I}Q_i$, and the projection of $\Mon^{0,ad}_\R= \prod_{i\in I}Q_i$ onto $\Mon^{0,ad}(\chi)=\SU(m_\chi, m_{\bar\chi})$ is surjective, we get that $Q_i$ is normal in $\SU(m_\chi, m_{\bar\chi})$. By simplicity,  we deduce that, for $i\in I$, $Q_i=\Mon^{0,ad}(\chi)=\SU(m_\chi, m_{\bar\chi})$ for some $\chi$.
	Thus, we have 
	\begin{gather}\label{prodmon}
		\Mon^{0,ad}_\R\hookrightarrow \prod_{m_\chi m_{\bar \chi}\neq 0} \SU(m_\chi, m_{\bar\chi})
	\end{gather}
	and each natural projection is surjective. Moreover, when $\Z$ is special, $\Mon^0=\Hdg^{der}$ (see for instance \cite[Theorem 3.1.4]{rohde}). Thus, in particular, $\Mon^{0,ad}_\R=\Hdg_\R^{der,ad}=\Hdg_\R^{ad}$. Hence, when $\Z$ is special, we have
	\begin{gather*}
		\Mon^{0,ad}_\R=\Hdg_\R^{ad}= \prod_{i=1}^sQ_i \hookrightarrow \prod_{m_\chi m_{\bar \chi}\neq 0} \SU(m_\chi, m_{\bar\chi}).	
	\end{gather*} 
	Therefore, $\Hdg_\R^{ad}\neq \prod_{m_\chi m_{\bar \chi}\neq 0} \SU(m_\chi, m_{\bar\chi})$ if and only if there exist $\chi_1\neq \chi_2$ such that $\Hdg_\R^{ad}$ contains a subgroup isomorphically mapped to $\SU(m_{\chi_1}, m_{\bar\chi_1})=\SU(m_{\chi_2}, m_{\bar\chi_2})$. 
\end{say}

\begin{say}\label{differenza}
	In order to address the issue discussed in \ref{sfSG}, let us recall that $\Sp(H^1(C_b, \R))^G$ is the group associated with the uniformizing symmetric space $\sieg_g^G$ of the PEL special subvariety $S(G)$. Using \eqref{decomposizione SG}, we have $\dim S(G)=\sum \delta(SU(m_\chi, m_{\bar{\chi}}))$. Therefore $\Z=S_f\subsetneq S(G)$ if and only if $\dim S_f < \dim S(G)$, hence if and only if $\Hdg_\R^{ad}\neq \prod_{m_\chi m_{\bar \chi}\neq 0} \SU(m_\chi, m_{\bar\chi})$.
\end{say}

\ \\
Summing up, we have proved the following:

\begin{prop}\label{riassuntino} 	Let $\mathcal{C}\rightarrow \B$ be a family of abelian coverings of the line. Suppose that $\Z=S_f$. The generic Hodge group of the family has the following properties:
	\begin{enumerate}
		\item $\Hg_{\R}^{ad}\cong Q_1\times\cdots\times Q_s$, with $Q_i=\Mon^{0,ad}(\chi)=SU(m_\chi, m_{\bar\chi})$ for some $\chi$;
		\item $\Hdg_\R^{ad}\hookrightarrow \prod_{m_\chi m_{\bar \chi}\neq 0} \SU(m_\chi, m_{\bar\chi})$ and each natural projection is surjective;
		\item  $\Z=S_f\subsetneq S(G)$ if and only if $\Hdg_\R^{ad}\neq \prod_{m_\chi m_{\bar \chi}\neq 0} \SU(m_\chi, m_{\bar\chi})$, namely if and only if there exist $\chi_1\neq \chi_2$ such that $\Hdg_\R^{ad}$ contains a subgroup isomorphically mapped to $\SU(m_{\chi_1}, m_{\bar\chi_1})=\SU(m_{\chi_2}, m_{\bar\chi_2})$.
	\end{enumerate}
\end{prop}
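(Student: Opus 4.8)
The statement consolidates the material developed in \ref{vertaasoluta} and \ref{differenza}, so the plan is to assemble those ingredients into the three claims. The starting observation is that, since $\Z=S_f$ is special, the equality $\Mon^0=\Hdg^{der}$ holds (see \cite[Theorem 3.1.4]{rohde}), whence $\Hdg_\R^{ad}=\Mon^{0,ad}_\R$. This identifies the adjoint Hodge group with the adjoint connected monodromy group and reduces $(1)$ and $(2)$ to statements about $\Mon^0$.

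For $(1)$, I would invoke André's theorem \cite[Theorem 1]{andre} to conclude that $\Mon^0$ is a semisimple normal subgroup of $\Hdg$, and write $\Mon^{0,ad}_\R=\prod_i Q_i$ into $\R$-simple factors. Combining the decomposition \eqref{decomposizione SG} with Proposition \ref{mohajerzuo}, each projection $\Mon^{0,ad}(\chi)$ equals $\SU(m_\chi,m_{\bar\chi})$; since $Q_i$ is normal in $\Mon^{0,ad}_\R$ and the projection onto the factor $\SU(m_\chi,m_{\bar\chi})$ is surjective, the image of $Q_i$ is normal there, and simplicity of $\SU(m_\chi,m_{\bar\chi})$ forces $Q_i=\SU(m_\chi,m_{\bar\chi})$ for some $\chi$. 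Assembling these projections then yields the embedding of $(2)$, with each natural projection surjective again by Proposition \ref{mohajerzuo}.

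For $(3)$, I would read off from \ref{differenza} that $\dim S(G)=\sum_\chi\delta(\SU(m_\chi,m_{\bar\chi}))$ while $\dim S_f=\sum_i\delta(Q_i)$, so that $\Z=S_f\subsetneq S(G)$ is equivalent to the inclusion in $(2)$ being proper. The final and most delicate point is to translate ``proper inclusion'' into the stated diagonal condition: a subgroup of a product of simple groups that surjects onto each factor is, by Goursat's lemma, a product of diagonally embedded copies of mutually isomorphic factors. Hence the inclusion is proper precisely when at least two codomain factors $\SU(m_{\chi_1},m_{\bar\chi_1})$ and $\SU(m_{\chi_2},m_{\bar\chi_2})$ with $\chi_1\neq\chi_2$ coincide and $\Hdg_\R^{ad}$ contains a subgroup isomorphically mapped onto their diagonal.

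The main obstacle is this last Goursat-type step: one must check that surjectivity of all projections, together with simplicity of the factors, rigidly forces the subgroup to be a product of graph-subgroups, so that any failure of equality in $(2)$ is accounted for by a pair of isomorphic factors glued along an isomorphism. Everything else reduces to bookkeeping with the decompositions already established in the previous paragraphs.
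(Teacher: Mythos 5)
Your proposal is correct and follows essentially the same route as the paper, which assembles the proposition from the discussion in \S\ref{vertaasoluta} and \S\ref{differenza}: Andr\'e's theorem, the decomposition \eqref{decomposizione SG}, Proposition \ref{mohajerzuo}, the normality-plus-simplicity argument identifying each $Q_i$ with some $\SU(m_\chi,m_{\bar\chi})$, the equality $\Mon^0=\Hdg^{der}$ in the special case, and the dimension count for $(3)$. The only difference is that you make explicit, via a Goursat-type argument, the final equivalence between properness of the inclusion in $(2)$ and the existence of two coinciding factors onto which a single simple factor surjects --- a step the paper leaves implicit --- which is a harmless and arguably welcome elaboration.
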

The problem of understanding whether the condition \eqref{star} is necessary relies on the problem of understanding what is the smallest special subvariety $S_f$ containing the given a family of abelian $G$-covers. Under an assumption on the eigenspaces, we can state the following.

\begin{teo}\label{tutti diversi}
   	Let $\mathcal{C}\rightarrow \B$ be a family of abelian $G$-coverings of the line. Suppose that 
	\begin{gather*}
		\SU(m_{\chi_i}, m_{\bar\chi_i})\neq \SU(m_{\chi_j}, m_{\bar\chi_j})
	\end{gather*}
	for all $\chi_i, \chi_j\in X(G), i\neq j$ with $m_{\chi_i} m_{\bar {\chi}_i}\neq 0$. Then $S_f=S(G)$. In particular, $\Z$ is special if and only if \eqref{star} holds. 
\end{teo}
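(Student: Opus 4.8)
The plan is to read off the conclusion from the unconditional description of the connected monodromy group in \ref{vertaasoluta}, and to convert the non-isomorphism hypothesis into a Goursat-type statement. Recall that, \emph{without} assuming $\Z=S_f$, the embedding \eqref{prodmon} obtained from Proposition \ref{mohajerzuo} reads
\[
\Mon^{0,ad}_\R \hookrightarrow \prod_{m_\chi m_{\bar\chi}\neq 0} \SU(m_\chi, m_{\bar\chi}),
\]
and every natural projection onto a factor is surjective; in other words $\Mon^{0,ad}_\R$ is a subdirect product of the simple real groups $\SU(m_\chi, m_{\bar\chi})$ with $m_\chi m_{\bar\chi}\neq 0$. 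First I would upgrade this subdirect product to the full product.

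This is exactly where the hypothesis enters. The simple factors occurring in the product above (one for each conjugate pair $(\chi,\bar\chi)$ with $m_\chi m_{\bar\chi}\neq 0$) are pairwise non-isomorphic, this being the content of the assumption $\SU(m_{\chi_i}, m_{\bar\chi_i})\neq \SU(m_{\chi_j}, m_{\bar\chi_j})$. By Goursat's lemma, a subdirect product of pairwise non-isomorphic simple groups coincides with the whole direct product: a proper subdirect factor would force a graph-type identification, i.e.\ an isomorphism between two of the simple factors (compare Proposition \ref{riassuntino}(3)). Hence
\[
\Mon^{0,ad}_\R = \prod_{m_\chi m_{\bar\chi}\neq 0} \SU(m_\chi, m_{\bar\chi}).
\]
Consequently the symmetric space associated with $\Mon^0$ is the product of the symmetric spaces of the individual factors, and its dimension equals $\sum_{m_\chi m_{\bar\chi}\neq 0}\delta(\SU(m_\chi, m_{\bar\chi}))=\dim S(G)$, the last equality being the computation of $\dim S(G)$ from \eqref{decomposizione SG}.

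It remains to transfer the information from $\Mon^0$ to $\Hdg$. Since $\Mon^0$ is a semisimple subgroup of $\Hdg$, which in turn is contained in $\Sp(H^1(C_b,\R))^G$, compatible choices of maximal compact subgroups yield totally geodesic inclusions of the corresponding symmetric spaces. Now the symmetric space of $\Mon^0_\R$ has dimension $\dim S(G)$ and embeds into that of $\Hdg_\R$ (whose dimension is $\dim S_f$), which in turn embeds into $\sieg_g^G$ (of dimension $\dim S(G)$); hence $\dim S(G)\le \dim S_f\le \dim S(G)$, forcing $\dim S_f=\dim S(G)$. As $S_f\subseteq S(G)$ with $S(G)$ irreducible, we conclude $S_f=S(G)$. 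The final assertion is then formal: by definition $\Z$ is special precisely when $\Z=S_f$, so, since $S_f=S(G)$ and $\Z\subseteq S(G)$ with $S(G)$ irreducible, this happens if and only if $\dim\Z=\dim S(G)$, i.e.\ if and only if \eqref{star} holds.

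The only genuine obstacle is the Goursat step, where the numerical non-degeneracy hypothesis is turned into the statement that the subdirect product of the $\SU(m_\chi, m_{\bar\chi})$ fills out the entire product; everything else reduces to bookkeeping with the already-established decomposition \eqref{decomposizione SG} and to the routine passage between the reductive groups and their symmetric spaces (discarding centres and compact factors).
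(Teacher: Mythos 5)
Your proof is correct and follows essentially the same route as the paper: both deduce from \eqref{prodmon} and the pairwise non-isomorphism of the simple factors that $\Mon^{0,ad}_\R$ equals the full product $\prod_{m_\chi m_{\bar\chi}\neq 0}\SU(m_\chi,m_{\bar\chi})$, and then conclude by the dimension sandwich $\dim S(G)\le\dim S_f\le\dim S(G)$. The only difference is that you make the Goursat-type step explicit, which the paper leaves implicit in the phrase ``Under the assumption, \eqref{prodmon} implies\dots''.
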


\begin{proof}
Under the assumption, \eqref{prodmon} implies that 
\begin{gather*}
    \Mon^{0,ad}_\R= \prod_{m_\chi m_{\bar \chi}\neq 0} \SU(m_\chi, m_{\bar\chi})\subseteq \Hdg_{\R}^{ad}.
\end{gather*}
Hence we get $\dim S(G)=\sum \delta(SU(m_\chi, m_{\bar{\chi}}))\leq \dim S_f\leq \dim S(G)$, which  implies $S_f=S(G)$. In particular, $\Z$ is special if and only if $\Z=S_f=S(G)$.
\end{proof}
To provide a better understanding of the results above, now we discuss some examples. As in section \ref{Gcurves}: $\mathcal{C}\rightarrow \B$ is the family of abelian $G$-coverings of $\PP$ branched on $s$  points, the vector $\Theta=(\theta_1, \dots, \theta_s)$ gives the monodromy, and $\bar m= (\operatorname{ord} \theta_1, \dots, \operatorname{ord}\theta_s) $ the local orders. Clearly, we have $\dim B= s-3$. Let us also recall that, since the group is abelian, the irreducible complex representations $\chi \in X(G, \C)$ are in 1-1 correspondence with the elements of $G$. Thus, we denote them as $\chi_g, g \in G$.
\begin{ex}
    This first example is an application of Theorem \ref{tutti diversi}. 
The data are the following: \begin{gather*}
    G=\Zeta/6; \quad  \Theta=([3],[3],[3],[4],[5]) \:\:(s=5, g=4)\\ \bar m= (2,2,2,3,6).
\end{gather*}
The dimensions $m_{\chi}, m_{\bar\chi}$ of the eigenspaces of the  $G$-action on the cohomology of the fibers can be computed in terms of the monodromy datum (see e.g. \cite[Prop. 2.8]{mohajer-zuo-paa}). We get the following: 
\begin{gather*}
    \Mon^{0, ad}(\chi_{[1]})=\SU(1,2),\, \Mon^{0, ad}(\chi_{[2]})=\{1\},\, \Mon^{0, ad}(\chi_{[3]})=SU(1,1).
\end{gather*}
In this situation, by Theorem \ref{tutti diversi}, we conclude that $\Z\subsetneq S_f=S(G).$
\end{ex}
\begin{ex}
In this second example we present a situation where Proposition \ref{riassuntino} allows us to understand that the family is not special. Nevertheless, we can not decide whether $S_f=S(G)$. The data are the following: \begin{gather*}
    G=\Zeta/2\times \Zeta/2\times \Zeta/2=\langle g_1\rangle\times \langle g_2\rangle\times \langle g_3\rangle; \quad  \Theta=(g_2, g_2, g_2, g_1g_2, g_1g_2g_3, g_2g_3) \:\:(s=6, g=5)\\ \bar m= (2,2,2,2,2,2).
\end{gather*}
The eigenspaces are the following: 
\begin{gather*}
    \Mon^{0, ad}(\chi_{g_1})=\{1\},\, \Mon^{0, ad}(\chi_{g_2})=\Sp_4,\, \Mon^{0, ad}(\chi_{g_3})=\{1\}, \, \Mon^{0, ad}(\chi_{g_1g_2})=SU(1,1), \\
    \Mon^{0, ad}(\chi_{g_1g_3})=\{1\},\, \Mon^{0, ad}(\chi_{g_2g_3})=SU(1,1),\, \Mon^{0, ad}(\chi_{g_1g_2g_3})=SU(1,1).
\end{gather*}
In this situation, reasoning as in Theorem \ref{tutti diversi}, we can just say that $\Sp_2\times SU(1,1)\subseteq \Hg^{ad}_\R$. This is enough to conclude that $\Z$ is not special, since $\dim\Z=3<4\leq \dim S_f$. On the other hand, unfortunately, we cannot understand whether $S_f\subseteq S(G)$ is just an inclusion or an equality.
\end{ex}
Finally, we discuss two last examples where, even if Proposition \ref{riassuntino} is not enough to conclude, we can use some evidences from the families to decide if they are, or they are not, special.
\begin{ex}
The data are the following:
\begin{gather*}
    G=\Zeta/2\times \Zeta/2=\langle g_1\rangle\times \langle g_2\rangle; \quad  \Theta=(g_2, g_1, g_1, g_1, g_1g_2, g_1, g_1) \:\:(s=7, g=4)\\ \bar m= (2,2,2,2,2,2,2).
\end{gather*}
We have that \begin{equation*}
    \Mon^{0, ad}(\chi_{g_1})=\Sp_4, \;\Mon^{0, ad}(\chi_{g_2})={1},\; \Mon^{0, ad}(\chi_{g_1g_2})=\Sp_4.
\end{equation*}
In principle, Proposition \ref{riassuntino} does not allow us to decide whether $S_f$ is equal or different from $S(G)$, since the two non-trivial monodromy groups are equal. Nevertheless, the symmetric space associated with $\Sp_4$ is 3-dimensional while the family is 4-dimensional, and so $\dim S_f\geq 4$. It follows that $S_f=S(G)(\neq Z)$.
\end{ex}

\begin{ex}
The data are the following:
\begin{gather*}
    G=\Zeta/2\times \Zeta/2=\langle g_1\rangle\times \langle g_2\rangle; \quad  \Theta=(g_2, g_1, g_2, g_1, g_1g_2, g_1, g_2) \:\:(s=7, g=4)\\ \bar m= (2,2,2,2,2,2,2).
\end{gather*}
We have that \begin{equation*}
    \Mon^{0, ad}(\chi_{g_1})=SU(1,1), \;\Mon^{0, ad}(\chi_{g_2})=SU(1,1),\; \Mon^{0, ad}(\chi_{g_1g_2})=\Sp_4.
\end{equation*}
Let us denote by $\lambda_1, \ldots, \lambda_4$ the four parameters varying in the family. By looking at the intermediate quotient curves, we can explain where do these eigenspaces come from. Indeed, the quotient curve $E:=C/\langle g_1\rangle$ is an elliptic curve moving in the Legendre family $y^2=x(x-1)(x-\lambda_1)$. This yields the first $SU(1,1)$. The quotient curve $C':=C/\langle g_1g_2\rangle$ is a genus 2 curve moving on a 3-dimensional locus $C'= C'_{\lambda_2, \lambda_3, \lambda_4}$. This yields $\Sp_4$. Finally, the quotient  $F:=C/\langle g_2\rangle$ is again an elliptic curve moving on a 1-dimensional locus. This yields the second $SU(1,1)$. Thus, by \cite[Theorem 0.1]{moonen-zar}, we get that $\Hdg^{ad}_\R= \Hdg^{ad}_\R(E)\times \Hdg^{ad}_\R(F)\times \Hdg^{ad}_\R(JC') $. In other words, we obtain $S_f=S(G) (\neq Z)$.

\end{ex}
\begin{remark}
    All the examples above suggest that it is reasonable to believe that the equality $S_f=S(G)$ should always hold in case of families of abelian covers of $\PP$. 
\end{remark}

Theorem \ref{tutti diversi} provides a complete description of $S_f$ in the case where the $SU(m_{\chi}, m_{\bar{\chi}})$ are all different. Now we focus on the other extremal case,  namely we consider the situation where the generic Hodge group is a product of $SU(1,1)$. In preparation for the next proposition, we recall the following fact from \cite{ire tesi}:

\begin{lemma}\label{graficosiegel}[\cite{ire tesi}, Prop. 3.2.4]
	Let $X$ be a one-dimensional irreducible totally geodesic submanifold of $\sieg_1\times \sieg_1$ such that $pr_1(X)=\sieg_1$, where $pr_1$ is the projection onto the first factor. Then $X$ is the graph of an isometry $h: \sieg_1 \ra \sieg_1$.
\end{lemma}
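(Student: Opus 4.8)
The plan is to translate the statement into the language of Lie triple systems and then run an explicit curvature computation. Write $\sieg_1\cong\operatorname{SL}(2,\R)/\SO(2)$, so that $\sieg_1\times\sieg_1=G/K$ with $G=\operatorname{SL}(2,\R)\times\operatorname{SL}(2,\R)$, $K=\SO(2)\times\SO(2)$, and let $\lieg=\liek\oplus\liem$ be the Cartan decomposition, where $\liem=\liem_1\oplus\liem_2$ and $\liem_i\cong T_{o_i}\sieg_1\cong\R^2$. A connected totally geodesic submanifold is an open piece of a complete one with the same tangent spaces, so I may assume $X$ complete and fix a base point $o=(o_1,o_2)\in X$; by the standard correspondence between complete totally geodesic submanifolds through $o$ and Lie triple systems, $X$ corresponds to a Lie triple system $\liem'=T_oX\subseteq\liem$ with $\dim_\R\liem'=2$. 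The hypothesis $pr_1(X)=\sieg_1$ forces the projection $\liem'\to\liem_1$ to be surjective, hence an isomorphism by dimension count, so $\liem'=\operatorname{graph}(\varphi)$ for a linear map $\varphi:\liem_1\to\liem_2$; the lemma then amounts to showing that $\varphi$ is a linear isometry.

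Next I would impose the Lie triple condition. Brackets in $\lieg=\mathfrak{sl}_2\oplus\mathfrak{sl}_2$ act componentwise, so for $u,v,w\in\liem_1$ the vector $[[(u,\varphi u),(v,\varphi v)],(w,\varphi w)]=([[u,v],w],[[\varphi u,\varphi v],\varphi w])$ lies in $\operatorname{graph}(\varphi)$ if and only if $[[\varphi u,\varphi v],\varphi w]=\varphi([[u,v],w])$. Since $\sieg_1$ is a real space form, its triple bracket (the curvature tensor) has the shape $[[a,b],c]=c_0(\langle b,c\rangle a-\langle a,c\rangle b)$ with the \emph{same} nonzero constant $c_0$ on both factors; substituting and cancelling $c_0$ yields
\begin{gather*}
\langle\varphi v,\varphi w\rangle\,\varphi u-\langle\varphi u,\varphi w\rangle\,\varphi v=\langle v,w\rangle\,\varphi u-\langle u,w\rangle\,\varphi v\qquad\text{for all }u,v,w.
\end{gather*}

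I would then extract the conclusion from this identity. If $\ker\varphi\neq 0$, choose $0\neq u_0\in\ker\varphi$; setting $u=u_0$ collapses the identity to $\langle u_0,w\rangle\,\varphi v=0$ for all $v,w$, and taking $w=u_0$ forces $\varphi\equiv 0$, i.e.\ $\liem'=\liem_1$ and $X=\sieg_1\times\{o_2\}$, which splits as a product of totally geodesic submanifolds of the two factors, contradicting irreducibility. Hence $\varphi$ is an isomorphism; choosing $u,v$ linearly independent and comparing the (now independent) coefficients of $\varphi u$ and $\varphi v$ gives $\langle\varphi v,\varphi w\rangle=\langle v,w\rangle$ for all $v,w$, so $\varphi$ is a linear isometry. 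On a space form every linear isometry preserves the curvature tensor, so $\varphi$ integrates to an isometry $h:\sieg_1\to\sieg_1$ with $dh_{o_1}=\varphi$; then $\operatorname{graph}(h)$ is a complete totally geodesic submanifold with tangent space $\liem'$ at $o$, and by uniqueness of complete totally geodesic submanifolds with prescribed tangent space we conclude $X=\operatorname{graph}(h)$.

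I expect the main obstacle to be the bookkeeping in the curvature computation together with the metric normalization: one must verify that $c_0$ is genuinely the same on both factors — which holds precisely because both factors are the same $\sieg_1$ — since otherwise the identity would only give a conformal map rather than an isometry. The remaining delicate points are the integration of the infinitesimal isometry $\varphi$ to the global isometry $h$ and the global identification $X=\operatorname{graph}(h)$, both of which rely on completeness and on the uniqueness of complete totally geodesic submanifolds with a given tangent space at a point.
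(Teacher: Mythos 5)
Your argument is correct, and there is nothing in the paper to compare it with: the lemma is quoted from \cite{ire tesi}, Prop.~3.2.4, and no proof is reproduced in the text, so your Lie-triple-system computation stands as a self-contained proof. The chain of steps is sound: reading ``one-dimensional'' as complex dimension one (so $\dim_\R T_oX=2$, which is forced if $pr_1(X)=\sieg_1$ is to hold); deducing surjectivity of $T_oX\to\liem_1$ from $pr_1(X)=\sieg_1$ (this uses that geodesics of a Riemannian product are products of geodesics, so $pr_1(\exp_o(\liem'))=\exp_{o_1}(p_1(\liem'))$ --- worth a sentence, but true); writing $T_oX$ as the graph of a linear $\varphi$; extracting $\langle\varphi v,\varphi w\rangle=\langle v,w\rangle$ from the constant-curvature form of the triple bracket, where the equality of the constants on the two factors is exactly as you say the crux that upgrades ``conformal'' to ``isometric''; and finally integrating $\varphi$ by Cartan's theorem and invoking uniqueness of complete totally geodesic submanifolds with prescribed tangent space. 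The one point that deserves explicit comment is the role of ``irreducible'': the submanifold $\sieg_1\times\{o_2\}$ satisfies $pr_1(X)=\sieg_1$ and is de Rham irreducible as a Riemannian manifold, yet it is not the graph of an isometry, so the lemma is only true if ``irreducible'' is read, as you do, as ``not a product of totally geodesic submanifolds of the two factors''. That reading is the one consistent with the way the lemma is used in Proposition \ref{graph}, where the relevant $SU(1,1)$-factor of $\Hdg^{ad}_\R$ projects isomorphically onto \emph{both} factors, so your treatment of the degenerate case $\ker\varphi\neq 0$ is acceptable, but you should state the interpretation explicitly rather than leave it implicit in the contradiction.
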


\begin{prop}\label{graph}
	Let $\mathcal{C}\rightarrow \B$ be a family of abelian $G$-coverings of the line. Suppose that $\Z=S_f$ and that $\Hg_{\R}^{ad}= Q_1\times\cdots\times Q_s$ with $Q_i\cong SU(1,1)$ for all $i.$ 
	Then both $\Z=S_f$ and $S(G)$ are uniformized by products of $\sieg_1$'s, i.e., if $M_f$ denotes the uniformizer of $S_f$ and $M(G)$ the uniformizer of $S(G)$, we have
	 \begin{gather}\label{unifo fam e pel}
	 	M_f\cong\underbrace{\sieg_1\times\dots \times\sieg_1}_{r \text{ times}} \hookrightarrow \underbrace{\sieg_1\times\dots \times\sieg_1}_{n \text{ times}}\cong M(G)
	 \end{gather}
	with $n\geq r$. Moreover, $\Z=S_f\subsetneq S(G)$ if and only if one of the $\sieg_1$'s in $M_f$ maps into $M(G)$ as the graph of an isometry $h:\sieg_1\ra \sieg_1$
	
		 \begin{gather}\label{funzione}
		M_f \supseteq \sieg_1 \hookrightarrow  \sieg_1\times \sieg_1\subseteq M(G) \\ \notag
	\tau \longmapsto (\tau, h(\tau)).
	\end{gather}
		
\end{prop}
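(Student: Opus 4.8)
The plan is to reduce everything to the inclusion of adjoint real groups $\Hdg_\R^{ad}\subseteq\bigl(\Sp(H^1(C_b,\R))^G\bigr)^{ad}$ and to read it off on the associated symmetric spaces, using the normality-plus-simplicity structure of products of $SU(1,1)$'s together with Lemma \ref{graficosiegel}.

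First I would identify both uniformizers as products of $\sieg_1$'s. Since by hypothesis $\Hdg_\R^{ad}=Q_1\times\cdots\times Q_s$ with each $Q_i\cong SU(1,1)$, and the symmetric space of $SU(1,1)$ is $\sieg_1$, the uniformizer is $M_f\cong\sieg_1^s$; set $r:=s$. For $M(G)$, recall from \ref{differenza} that $S(G)$ is uniformized by the symmetric space of $\Sp(H^1(C_b,\R))^G$, whose noncompact factors are the $SU(m_\chi,m_{\bar\chi})$ with $m_\chi m_{\bar\chi}\neq 0$ (including $\Sp_{2m_\chi}=SU(m_\chi,m_\chi)$ when $\chi=\bar\chi$). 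By Proposition \ref{riassuntino}(2) the projection $\Hdg_\R^{ad}\to SU(m_\chi,m_{\bar\chi})$ is surjective for each such $\chi$. Since the source is a product of the simple factors $Q_i\cong SU(1,1)$ and the target is simple, the image of each $Q_i$ is normal in $SU(m_\chi,m_{\bar\chi})$, hence trivial or everything; as distinct $Q_i$ commute, only one can surject, and a surjection $SU(1,1)\twoheadrightarrow SU(m_\chi,m_{\bar\chi})$ of simple groups forces $\dim SU(m_\chi,m_{\bar\chi})\leq\dim SU(1,1)=3$. As $\dim SU(p,q)=(p+q)^2-1$, this gives $m_\chi=m_{\bar\chi}=1$, so every noncompact factor is $SU(1,1)$ and $M(G)\cong\sieg_1^n$. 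The inclusion $M_f\hookrightarrow M(G)$ is the totally geodesic embedding induced by $\Hdg_\R\subseteq\Sp(H^1(C_b,\R))^G$, and $n\geq r$ since $S_f\subseteq S(G)$.

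Next I would analyze the embedding $\Hdg_\R^{ad}\cong SU(1,1)^r\hookrightarrow SU(1,1)^n$, for which every coordinate projection $p_j$ ($1\leq j\leq n$) is surjective. Restricting $p_j$ to a factor $Q_i$, the image $p_j(Q_i)$ is normal in $SU(1,1)_j$, hence trivial or all of it; two distinct factors commute, so at most one $Q_i$ surjects onto the $j$-th factor, and surjectivity of $p_j$ forces exactly one. This defines a map $i\colon\{1,\dots,n\}\to\{1,\dots,r\}$, $j\mapsto i(j)$, with $p_j|_{Q_{i(j)}}$ an isomorphism. Injectivity of the embedding shows each $Q_i$ projects nontrivially to some factor, so $i$ is surjective; in particular $r\leq n$, with equality precisely when $i$ is a bijection.

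Finally I would translate this to geometry. We have $S_f=S(G)$ iff $r=n$ iff $i$ is a bijection, i.e. each $Q_i$ hits exactly one target factor; equivalently $\Hdg_\R^{ad}=SU(1,1)^n$, recovering Proposition \ref{riassuntino}(3). Hence $S_f\subsetneq S(G)$ iff $i$ fails to be injective, i.e. some $Q_{i_0}$ surjects onto two distinct factors $j_1\neq j_2$. In that case the image of $Q_{i_0}$ in $SU(1,1)_{j_1}\times SU(1,1)_{j_2}$ is the graph of the isomorphism $p_{j_2}\circ(p_{j_1}|_{Q_{i_0}})^{-1}$; passing to symmetric spaces, the corresponding $\sieg_1\subseteq M_f$ is a one-dimensional totally geodesic submanifold of $\sieg_1\times\sieg_1\subseteq M(G)$ surjecting onto the first factor, so by Lemma \ref{graficosiegel} it is the graph of an isometry $h\colon\sieg_1\to\sieg_1$, which is exactly \eqref{funzione}. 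Conversely, a graph as in \eqref{funzione} forces $Q_{i_0}$ to surject onto two factors, so $i$ is not injective, $r<n$, and $S_f\subsetneq S(G)$. The main obstacle is the group-theoretic bookkeeping in the middle step: showing that each target projection factors through a single simple source factor and that this rigidity is precisely what distinguishes the full product from a proper diagonal embedding; the dimension count forcing $SU(m_\chi,m_{\bar\chi})=SU(1,1)$ is then routine, and the passage from a diagonal isomorphism to a graph of an isometry is supplied cleanly by Lemma \ref{graficosiegel}.
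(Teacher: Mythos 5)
Your proposal is correct and follows essentially the same route as the paper's proof: both deduce from Proposition \ref{riassuntino} that every $\SU(m_\chi,m_{\bar\chi})$ with $m_\chi m_{\bar\chi}\neq 0$ must equal $SU(1,1)$ (so both uniformizers are products of $\sieg_1$'s), then characterize $S_f\subsetneq S(G)$ by a factor of $\Hdg_\R^{ad}$ mapping isomorphically onto two target factors, and conclude via Lemma \ref{graficosiegel}. You merely spell out the normality-and-commutation bookkeeping that the paper leaves implicit.
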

\begin{proof}
    By assumption, we have that all the $Q_i$ appearing in $\Hg_{\R}^{ad}$ are $SU(1,1)$. By Proposition \ref{riassuntino},  $\Hdg_\R^{ad}\subseteq \prod_{m_\chi m_{\bar \chi}\neq 0} \SU(m_\chi, m_{\bar\chi})$ and each natural projection is surjective. Hence for all eigenspaces $\chi$ with $m_\chi m_{\bar \chi}\neq 0$ we have $\SU(m_\chi, m_{\bar\chi})=SU(1,1)$. Thus, we obtain \eqref{unifo fam e pel}. Now suppose that $\Z=S_f\subsetneq S(G)$. Then by point (3) in proposition \ref{riassuntino}, we get that one of the $Q_j=SU(1,1)\subset \Hdg_{\R}^{ad}$ is mapped into $SU(1,1)\times SU(1,1)$, and the projection onto each factor is an isomorphism.  By looking at the associated symmetric spaces, and applying Lemma \ref{graficosiegel}, we prove the second part of the statement.
\end{proof}

\section{Totally decomposable families}\label{proofmain}
In this section we consider families of totally decomposable abelian $G$-curves and we characterize their generic Hodge group. Using this characterization, together with results from Section \ref{smallest special}, we prove Theorem \ref{ThmA}.  We conclude with some comments on the relationship between Theorem \ref{ThmA} and the existing literature.
\begin{say}
	Let $\Lambda$ be a rank $2g$ lattice, $Q: \Lambda \times \Lambda \ra \Zeta$ a symplectic form of type $(1,\ldots,1)$, and set $V:=\Lambda_{\R}$. We denote by $\sieg(V, Q)$ the Siegel space, which can be defined as: \begin{equation}\label{siegel}
		\sieg(V, Q)=\{J\in \Gl(\Lambda_{\R}):\ J^2=-I,\ J^*Q=Q,\ Q(x, Jx)>0,\ \forall x\neq 0\}.
	\end{equation}
	We consider on $\sieg(V, Q)$ the natural polarized integral variation of Hodge structure, whose fiber over $J\in \sieg(V, Q)$ is $V_{\C}$ equipped with the tautological Hodge structure $V_{\C}=V_{-1,0}(J)\oplus V_{0,-1}(J)$. Since this Hodge structure only depends on $J$, we denote it simply by $J$, and we let  $\rho_J: \mathbb{S}(\R) \ra GL(V_{\R})$ be its associated representation, that is $\rho_J(z)v=zv$ for $v\in V_{-1,0}(J)$ and $\rho_J(z)v=\overline{z}v$ for $v\in V_{0,-1}(J)$.
	We denote by $\Hg(J)$ the Hodge group associated with $J$. Note that $\Hg(J)\subset \Sp(\Lambda_{\QQ}, Q)$, since $Q$ is a polarization for $J$. The tautological variation of Hodge structure on $\sieg(V, Q)$ descends to an (orbifold) variation over $\A_g$. We recall the following results (see e.g. \cite[Lemma 4.5]{deba}):
\end{say}	
\begin{teo}\label{deba}
	The orbit $\Hg(J)_{\R}.J\subset \sieg(V,Q)$ is a complex totally geodesic submanifold of $\sieg(V, Q)$. With the induced metric, it is a Hermitian symmetric space of non-compact type. Moreover, assume that $Z\subset \A_g$ is a special subvariety and that $x\in Z$ is general. Then for $J \in\sieg(V, Q)$ with $\pi(J)=x$, we have $Z=\pi(\Hg(J)_{\R}.J)$. 
\end{teo}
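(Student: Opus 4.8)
The plan is to separate the two claims: first the purely Lie-theoretic statement that the orbit is a complex totally geodesic Hermitian symmetric subspace, and then the identification of the image of this orbit with the special subvariety $Z$.

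For the first part, the starting point is that, by the very definition of the Hodge group, $\rho_J(\U(1))\subseteq\Hg(J)_\R$, and that as an operator on $V$ one has $J=\rho_J(i)$. The stabilizer of the point $J\in\sieg(V,Q)$ inside $\Sp(V,Q)_\R$ is the maximal compact subgroup $K=\{g:gJ=Jg\}\cong\U(g)$, and the associated Cartan involution of $\Sp(V,Q)_\R$ is $\theta=\Ad(\rho_J(i))$. Since $\rho_J(i)\in\Hg(J)_\R$ and $\Hg(J)_\R$ is a group, I would conclude that $\Hg(J)_\R$ is $\theta$-stable, so that its Lie algebra splits as $\lieh=(\lieh\cap\liek)\oplus(\lieh\cap\liep)$ along the Cartan decomposition $\lieg=\liek\oplus\liep$ of $\mathfrak{sp}(V,Q)$. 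Because $\lieh\cap\liep$ is then a Lie triple system, the orbit $\Hg(J)_\R.J\cong\Hg(J)_\R/(\Hg(J)_\R\cap K)$ is totally geodesic. For complexity I would use that the invariant complex structure on $T_J\sieg(V,Q)\cong\liep$ is $\ad(H_0)$ for the central element $H_0\in\liek$ generating $\rho_J|_{\U(1)}$; as $H_0\in\lieh\cap\liek$, the subspace $\lieh\cap\liep$ is stable under it, so the orbit is a complex submanifold. Finally, since $\theta$ is realized by the inner automorphism $\Ad(\rho_J(i))$ with $\rho_J(i)\in\Hg(J)_\R$, the center of the reductive group $\Hg(J)_\R$ is $\theta$-fixed, hence compact; this excludes Euclidean factors and shows that the orbit, being a totally geodesic complex submanifold of the Hermitian symmetric space of non-compact type $\sieg(V,Q)$, is itself Hermitian symmetric of non-compact type.

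For the last assertion I would invoke the description of special subvarieties as (components of) Hodge loci. Write $\Hg_Z$ for the generic Hodge group of the $\QQ$-VHS restricted to $Z$; since $x$ is general in $Z$, one has $\Hg(J)=\Hg_Z$. The Hodge tensors defining the Hodge locus whose component is $Z$ are fixed by $\Hg_Z=\Hg(J)$, hence by all of $\Hg(J)_\R$; for any $g\in\Hg(J)_\R$ the conjugated circle $g\,\rho_J(\U(1))\,g^{-1}$ then still fixes each such tensor, so the whole connected orbit $\Hg(J)_\R.J$ lies in the preimage of that Hodge locus, giving $\pi(\Hg(J)_\R.J)\subseteq Z$. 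For the opposite inclusion I would observe that both $Z$ (special subvarieties are totally geodesic) and $\pi(\Hg(J)_\R.J)$ (by the first part) are totally geodesic submanifolds through $x$, so they coincide as soon as their tangent spaces at $x$ agree; and $T_xZ$ is exactly the image of $\lieh\cap\liep$, because the infinitesimal variation of the Hodge structure along $Z$ is governed by the same generic Hodge group. Rigidity of totally geodesic submanifolds (a point together with its tangent space there determines them) then yields $Z=\pi(\Hg(J)_\R.J)$.

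I expect the reverse inclusion in the last part to be the main obstacle. Whereas the first two claims reduce to the single observation that the Cartan involution at $J$ is inner and already lies in $\Hg(J)_\R$, the equality $T_xZ=\lieh\cap\liep$ together with the total geodesy of $Z$ rest on the deeper theory of Hodge loci and Mumford--Tate domains (the characterization of special subvarieties via the generic Hodge group, and the algebraicity of Hodge loci), which is precisely the input surveyed in \cite{deba}.
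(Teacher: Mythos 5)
The paper does not prove this theorem: it is recalled verbatim from \cite[Lemma 4.5]{deba}, so there is no internal argument to compare yours against. Judged on its own terms, your proof of the first two claims is correct and is the standard one: $J=\rho_J(i)\in\Hg(J)_\R$, the Cartan involution $\theta=\Ad(J)$ of $\Sp(V,Q)_\R$ at the base point is inner, so $\Hg(J)_\R$ is $\theta$-stable, $\lieh\cap\liep$ is a Lie triple system stable under the complex structure of $\liep$, and the center of $\Hg(J)_\R$ is $\theta$-fixed, hence lies in the compact group $K$, which rules out Euclidean factors. The inclusion $\pi(\Hg(J)_{\R}.J)\subseteq Z$ is also correctly argued via the invariant tensors and connectedness of the orbit.

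The reverse inclusion, however, contains a genuine gap, and it sits exactly where you suspect. The sentence asserting that $T_xZ$ equals the image of $\lieh\cap\liep$ ``because the infinitesimal variation of the Hodge structure along $Z$ is governed by the same generic Hodge group'' restates the conclusion rather than proving it: the generic Hodge group a priori only yields the inclusion of tangent spaces you have already established, namely $T_x\pi(\Hg(J)_{\R}.J)\subseteq T_xZ$, and total geodesicity of $Z$ together with a one-sided inclusion of tangent spaces does not force equality. The standard way to close the argument is tensorial rather than infinitesimal: realize $Z$ as a component of the Hodge locus of the full set of $\Hg(J)$-invariant tensors; by Chevalley's theorem $\Hg(J)$ is recovered as the stabilizer of its invariants, so every $J'$ in the preimage of this locus satisfies $\Hg(J')\subseteq\Hg(J)$, hence $J'=\rho_{J'}(i)\in\Hg(J)_\R$, i.e.\ the relevant component lies in $\sieg(V,Q)\cap\Hg(J)_\R$; a tangent-space computation then shows that the orbit is open (and it is clearly closed) in the component of this intersection through $J$, whence equality. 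If instead you intend to quote this identification of the Hodge-locus component with the orbit from \cite{deba}, you should say so explicitly, since it is the entire content of the final assertion.
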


\begin{say}
	In the case of elliptic curves, we have:
\end{say}
\begin{prop}\label{prop1}
	Let $E$ be an elliptic curve and $J\in \sieg_1$ such that $\pi(J)=[E]\in \A_1$. Then:
	\begin{enumerate}
		\item if $E$ has CM, then $\Hg(J)_{\R}.J=J$, i.e. it is a point in $\sieg_1$;
		\item otherwise $\Hg(J)_{\R}.J=\sieg_1$.
	\end{enumerate}
\end{prop}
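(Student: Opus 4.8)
The plan is to identify $\Hg(J)$ explicitly from the endomorphism algebra of $E$ and then read off the orbit from the structure theory of the rank-one group $\Sp_2\cong\operatorname{SL}_2$. Throughout I would use two standard inputs: that $\Hg(J)$ is a connected reductive $\QQ$-group with $\Hg(J)\subseteq\Sp(\Lambda_\QQ,Q)=\Sp_2$ (the latter because $Q$ polarizes $J$), and that $\End^0(E):=\End(E)\otimes\QQ$ is the commutant of $\Hg(J)$ inside $\End(H^1(E,\QQ))$.

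First I would record the general shape of $\Hg(J)$. Since $\rho_J$ is non-trivial, $\Hg(J)$ contains the non-trivial compact torus $\rho_J(\U(1))$, the image of the unit circle in $\mathbb{S}(\R)=\C^\times$; hence $\Hg(J)$ is a positive-dimensional connected reductive subgroup of $\Sp_2$. As $\Sp_2$ has rank one, the only such subgroups are the maximal tori (dimension $1$) and $\Sp_2$ itself (dimension $3$). It then remains to decide which occurs in each case and to identify the resulting orbit in the one-dimensional space $\sieg_1$.

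For case (1), I would argue as follows. If $E$ has CM by an imaginary quadratic field $K$, then $K$ acts by Hodge endomorphisms, so $\Hg(J)$ lies in the commutant of $K^\times$ in $\Sp_2$, which is the anisotropic norm-one $\QQ$-torus $T=\ker\big(\Nm:\operatorname{Res}_{K/\QQ}\mathbb{G}_m\to\mathbb{G}_m\big)$. By the previous step $\Hg(J)=T$. Over $\R$ we have $K\otimes\R\cong\C$ and $T(\R)\cong\U(1)$ is compact; in fact $\rho_J(\U(1))=T(\R)$ preserves the splitting $V_{-1,0}(J)\oplus V_{0,-1}(J)$ and is exactly the maximal compact subgroup of $\Sp_2(\R)$ stabilizing $J$. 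Therefore $\Hg(J)_\R.J=\{J\}$, giving (1). For case (2), if $E$ has no CM then $\End^0(E)=\QQ$, so the commutant of $\Hg(J)$ is reduced to the scalars. A proper maximal torus of $\Sp_2$ has a two-dimensional commutant (a quadratic $\QQ$-algebra), so $\Hg(J)$ cannot be a torus; by the first step it equals $\Sp_2$. Then $\Hg(J)_\R=\Sp_2(\R)$ acts transitively on $\sieg_1\cong\Sp_2(\R)/\U(1)$, whence $\Hg(J)_\R.J=\sieg_1$, giving (2). As a cross-check one may instead invoke Theorem \ref{deba}: the orbit is a totally geodesic complex submanifold of $\sieg_1$, hence a point or all of $\sieg_1$, collapsing to a point precisely when $\Hg(J)_\R$ is compact.

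The only genuinely non-formal inputs are the standard facts that the Hodge group is reductive with commutant $\End^0(E)$, and the identification in the CM case of $\rho_J(\U(1))$ with the stabilizer of $J$ (equivalently, the compactness of $T(\R)$), which is exactly what forces the orbit to collapse to a point. Everything else reduces to the elementary subgroup structure of the rank-one group $\Sp_2$, so I expect this identification to be the main (though mild) obstacle.
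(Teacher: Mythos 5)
Your proof is correct and follows essentially the same route as the paper, which establishes the dichotomy ``$E$ has CM iff $\Hg(J)$ is a torus, else $\Hg(J)=\Sp(\Lambda_\QQ,Q)$'' (citing Mumford) and reads off the orbit; you simply derive this dichotomy directly from the commutant characterization of $\End^0(E)$ and the rank-one subgroup structure of $\Sp_2$. If anything, your version is slightly more careful on the one point that actually makes the orbit collapse: the real form of the Hodge group in the CM case is the compact torus $\U(1)$ stabilizing $J$ (the paper loosely writes $(\mathbb{G}_m)_\R$, which as a split torus would instead give a geodesic orbit).
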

\begin{proof}
	Remember that an elliptic curve $E$ has complex multiplication when $\End(E)\otimes_{\Zeta}\QQ$ is a complex multiplication field, i.e., it is a totally imaginary quadratic extension of a totally real number field. This happens if and only if $\End_{\QQ}(E)\supsetneqq \QQ$.  Equivalently, if $J\in \sieg_1$ is as in the statement, $E$ has complex multiplication if and only if $\Hg(J)$ is a torus algebraic group (\cite[Section 2]{mumford-Shimura}). This happens if and only if $\Hg(J)\subsetneq \Sp(\Lambda_{\QQ}, Q)$. Hence, if $E$ has CM, then $\Hg(J)_{\R}$ is the algebraic torus $(\mathbb{G}_m)_{\R}$, while for $E$ general $\Hg(J)_{\R}=\Sp(2, \R)$. It follows that the orbits are as claimed.
\end{proof}
\begin{remark}\label{diagonally}
	Notice that for $n\geq 1$ one can identify $\Hg(E^n)$ with $\Hg(E)$, acting diagonally on $\Lambda_{\QQ}^n$. 
\end{remark}
In the case of a totally decomposable abelian variety we have the following result by Imai \cite{imai}.
\begin{prop}\label{prop2}
	Let $E_1,\ldots,E_n$ be elliptic curves, no two of which are isogenous. Then we have that  $\Hg(E_1^{k_1}\times\dots \times E_n^{k_n})=\Hg(E_1)\times \dots\times \Hg(E_n)$.
\end{prop}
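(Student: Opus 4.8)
The plan is to realize $\Hg(A)$, for $A=E_1^{k_1}\times\dots\times E_n^{k_n}$, as a reductive subgroup of $\prod_i\Hg(E_i)$ that surjects onto every factor, and then to show that no two factors can be glued, forcing the subgroup to be the full product. First I would reduce to the case $k_i=1$: by Remark \ref{diagonally} one has $\Hg(E_i^{k_i})=\Hg(E_i)$ acting diagonally, and since $H_1(A,\QQ)=\bigoplus_i W_i^{\oplus k_i}$ with $W_i:=H_1(E_i,\QQ)$, passing from $E_i$ to $E_i^{k_i}$ merely repeats the summand $W_i$ diagonally and does not alter the Hodge group. Next I would record the two standard facts on which everything rests. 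On the one hand, for a direct sum of polarizable Hodge structures the Hodge group $\Hg(\bigoplus_i W_i)$ is contained in $\prod_i\Hg(W_i)$ and its projection to each factor is surjective: the projection is a $\QQ$-algebraic subgroup of $\Gl(W_i)$ containing the image of the Hodge cocharacter, hence it contains, and therefore equals, $\Hg(W_i)=\Hg(E_i)$. On the other hand, by Proposition \ref{prop1} each $\Hg(E_i)$ is isomorphic to $\operatorname{SL}_2\cong\Sp(W_i,Q_i)$ when $E_i$ has no complex multiplication, and to the norm-$1$ torus $T_{K_i}=\ker\big(\Nm\colon\operatorname{Res}_{K_i/\QQ}\mathbb{G}_m\to\mathbb{G}_m\big)$ when $E_i$ has CM by the imaginary quadratic field $K_i$. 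I would also note that non-isogenous CM elliptic curves necessarily have distinct CM fields, since any two elliptic curves with CM by the same imaginary quadratic field are isogenous.

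Setting $G:=\Hg(A)$, the group $G$ is reductive because the Hodge structure is polarizable, so $G=G^{\mathrm{der}}\cdot Z(G)^\circ$. As the projections $G\to\Hg(E_i)$ are surjective, $G^{\mathrm{der}}$ surjects onto each semisimple factor $\Hg(E_i)\cong\operatorname{SL}_2$ (the non-CM indices) while $Z(G)^\circ$ surjects onto each torus factor $T_{K_i}$ (the CM indices); moreover an $\operatorname{SL}_2$-factor and a torus factor can never be glued, since $\operatorname{SL}_2$ and a torus share no nontrivial quotient. Thus it suffices to prove separately that $G^{\mathrm{der}}$ is the full product of the $\operatorname{SL}_2$'s and that $Z(G)^\circ$ is the full product of the $T_{K_i}$'s.

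For the semisimple part I would invoke Goursat's lemma for a product of simple algebraic groups: a connected subgroup surjecting onto each factor is the full product unless two factors are linked, i.e. unless for some $i\neq j$ the image of $G^{\mathrm{der}}$ in $\operatorname{SL}_2\times\operatorname{SL}_2$ is the graph of an isomorphism $\operatorname{SL}_2\xrightarrow{\sim}\operatorname{SL}_2$. Such a graph would identify the two standard representations compatibly with the Hodge cocharacter, producing an isomorphism of rational Hodge structures $W_i\cong W_j$; since $\Hom_{\mathrm{HS}}(W_i,W_j)=\Hom(E_i,E_j)\otimes\QQ$, this would yield an isogeny $E_i\sim E_j$, contrary to hypothesis. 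Hence no two $\operatorname{SL}_2$-factors are linked and $G^{\mathrm{der}}=\prod_{E_i\ \text{non-CM}}\operatorname{SL}_2$.

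For the toral part I would pass to character lattices. Each $X^*(T_{K_i})\otimes\QQ$ is one-dimensional, and $\operatorname{Gal}(\overline{\QQ}/\QQ)$ acts on it through the quadratic character $\epsilon_i$ cutting out $K_i$. As the fields $K_i$ are pairwise distinct, the $\epsilon_i$ are pairwise distinct, so $\bigoplus_i X^*(T_{K_i})\otimes\QQ$ is a multiplicity-free semisimple Galois module whose only submodules are the coordinate sub-sums. The inclusion $Z(G)^\circ\hookrightarrow\prod T_{K_i}$ corresponds to a surjection $\bigoplus_i X^*(T_{K_i})\twoheadrightarrow X^*(Z(G)^\circ)$ of Galois modules whose kernel is such a sub-sum; a nonzero kernel would make some projection $Z(G)^\circ\to T_{K_i}$ non-surjective, a contradiction. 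Hence the kernel is zero and $Z(G)^\circ=\prod_{E_i\ \text{CM}}T_{K_i}$. Combining the two parts gives $G=\prod_i\Hg(E_i)$, as claimed. The main obstacle is precisely the ``no gluing'' step, and within it the translation of a group-theoretic link into a Hodge-theoretic relation: the semisimple case hinges on $\Hom_{\mathrm{HS}}(W_i,W_j)=\Hom(E_i,E_j)\otimes\QQ$ together with non-isogeny, while the toral case hinges on the distinctness of the CM fields, which in turn uses that CM curves with the same field are isogenous.
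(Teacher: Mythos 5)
Your proof is correct, but there is nothing in the paper to measure it against: the authors do not prove Proposition \ref{prop2} at all --- they state it as a known theorem of Imai and simply cite \cite{imai}. Your argument is a sound, self-contained derivation along the standard modern lines: reduce to $k_i=1$ via Remark \ref{diagonally}; use that the Hodge group of a direct sum embeds in the product of the Hodge groups with surjective projections; split $G=\Hg(A)$ into $G^{\mathrm{der}}$ (landing in the non-CM $\operatorname{SL}_2$-factors) and $Z(G)^\circ$ (landing in the CM torus factors, since its image in each $\operatorname{SL}_2$ is connected and central, hence trivial); and then rule out Goursat-type linkings. The two places where you compress are exactly the two translations from group theory back to Hodge theory, and both are standard and fixable in a line each: for a linked pair of $\operatorname{SL}_2$'s, the linking isomorphism $\Sp(W_i)\to\Sp(W_j)$ is inner, and by Hilbert 90 the conjugating element can be chosen as a $\QQ$-linear isomorphism $u\colon W_i\to W_j$; since $h(\U(1))\subset\Hg(\R)$ projects into the graph, $u$ intertwines the Hodge cocharacters, so $u\in\Hom_{\mathrm{HS}}(W_i,W_j)=\Hom(E_i,E_j)\otimes\QQ$ and $E_i\sim E_j$, a contradiction; for the tori, non-isogenous CM curves have distinct fields, hence distinct quadratic Galois characters on the one-dimensional character lattices, and multiplicity-freeness forces $Z(G)^\circ$ to be the full product. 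Beyond self-containedness, your route has the merit of exhibiting the same mechanism the paper exploits downstream: the alternative ``full product versus graph of an isomorphism'' that you eliminate here using non-isogeny is precisely the dichotomy of Lemma \ref{graficosiegel} and Proposition \ref{graph}, which the proof of Theorem A then rules out by a parametrization-counting argument instead of by an isogeny argument.
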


\begin{say}\label{totdec}
	Let $G$ be a finite group. Consider the family $f:\mathcal{C}\rightarrow \B$ of all Galois covers $C_t\rightarrow \mathbb{P}^1=C_t/G$ with fixed ramification and monodromy. As in \ref{condizionestar}, we denote by $\Z$ the associated subvariety of $\A_g$.
	In the following, we suppose that:
	\begin{enumerate}
		\item $\Z$ is a special subvariety of $\A_g$;
		\item $\Z$ is totally decomposable, i.e., for $x=[JC]\in \Z$ generic, we have $JC\sim E_1^{k_1}\times\dots \times E_n^{k_n}$ with $E_i \nsim E_j$ for $i\neq j$.
	\end{enumerate}
	Since $\Z$ is special, it will contain a dense set of CM points (see \cite{mumford-Shimura}). Let $y\in \Z$ be one of them. By definition, the curves $E_1,\ldots,E_n$ at the point $y$ have CM. It follows that, if ${E}_i$ is not varying in the family, then ${E}_i$ must have CM. Thus, up to reordering, the generic point $JC$ in the family looks like 
    \begin{gather}
    \label{splitting}
       E_1^{k_1}\times\dots \times E_r^{k_r}\times \bar{E}_{r+1}^{k_{r+1}}\times\dots \times \bar{E}_n^{k_n},
    \end{gather} where $\bar{E}_i$, $i=r+1,\dots, n$ are fixed CM elliptic curves. Let $\tau_i\in \sieg_1$ be such that $\pi(\tau_i)=[\bar{E}_i]\in \A_1$. We have the following:
\end{say}
\begin{prop}\label{prop}
	Let $x=[JC]\in \Z$ be a generic point. Then 
	\begin{gather*}
		\Hg(JC)_{\R}\cong\underbrace{\Sp(2,\R)\times\dots \times\Sp(2,\R)}_{r \text{ times}} \times T_{r+1}\times \dots \times T_n	
	\end{gather*}
	where $T_i=(\mathbb{G}_m)_{\R}$. In particular, the symmetric space $\Hg(JC)_{\R}.JC$ associated with $\Z$ is isomorphic to
	\begin{gather}\label{uniformizz}
		\underbrace{\sieg_1\times\dots \times\sieg_1}_{r \text{ times}} \times \tau_{r+1}\times \dots \times \tau_n.
	\end{gather}
	The isomorphism is given by the map
	\begin{gather*}
		\Phi: \underbrace{\sieg_1\times\dots \times\sieg_1}_{r \text{ times}} \times \tau_{r+1}\times \dots \times \tau_n \longrightarrow 	\Hg(JC)_{\R}.JC\subseteq \underbrace{\sieg_1\times\dots \times\sieg_1}_{g \text{ times}} \subset \sieg_g
		\\
		\Phi(J_1,\ldots,J_r, \tau_{r+1},\ldots, \tau_n)=( \underbrace{J_1,\ldots,J_1}_{k_1\  \text{times}},\ldots,\underbrace{J_r,\ldots,J_r}_{k_r\  \text{times}},\underbrace{\tau_{r+1},\ldots,\tau_{r+1}}_{k_{r+1}\  \text{times}} ,\ldots, \underbrace{\tau_n,\ldots,\tau_n}_{k_n\  \text{times}} ).
	\end{gather*}
\end{prop}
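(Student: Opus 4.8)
The plan is to deduce everything from the three structural facts recalled just above: Imai's product formula (Proposition \ref{prop2}) together with the diagonal identification of Remark \ref{diagonally} will pin down the shape of the Hodge group, Proposition \ref{prop1} will evaluate each elliptic factor, and Theorem \ref{deba} will carry the computation over to the uniformizing symmetric space.

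First I would start from the splitting \eqref{splitting}. Since the elliptic factors $E_1,\ldots,E_r,\bar E_{r+1},\ldots,\bar E_n$ are pairwise non-isogenous, Imai's Proposition \ref{prop2} gives $\Hg(JC)=\Hg(E_1)\times\cdots\times\Hg(E_r)\times\Hg(\bar E_{r+1})\times\cdots\times\Hg(\bar E_n)$, where the multiplicities $k_i$ have been absorbed by Remark \ref{diagonally}, so that $\Hg(E_i^{k_i})=\Hg(E_i)$ acting diagonally on $\Lambda_\QQ^{k_i}$. As the Hodge group depends only on the rational Hodge structure, this identification is insensitive to the fact that \eqref{splitting} holds only up to isogeny. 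Extending scalars to $\R$ and applying Proposition \ref{prop1} factorwise then yields the first displayed isomorphism: for a generic $x=[JC]\in\Z$ the varying curves $E_1,\ldots,E_r$ are non-CM (as observed in \ref{totdec}, a non-varying factor of a special $\Z$ is forced to carry CM), so $\Hg(E_i)_\R=\Sp(2,\R)$ for $i\le r$, while each fixed $\bar E_i$ has CM, giving $\Hg(\bar E_i)_\R=T_i=(\mathbb{G}_m)_\R$ for $i>r$.

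For the symmetric space I would compute the orbit $\Hg(JC)_\R.JC$ block by block. Decomposing $\Lambda_\R$ as $\bigoplus_{i\le r}V_i^{k_i}\oplus\bigoplus_{i>r}W_i^{k_i}$, each factor of the product Hodge group acts only on its own block, and it acts there diagonally by Remark \ref{diagonally}. Since the orbit of a direct product acting block-diagonally is the product of the orbits of the factors, it suffices to treat each block. By Proposition \ref{prop1}(2), the diagonal $\Sp(2,\R)$ acting on $V_i^{k_i}$ sweeps out, inside $\sieg_1^{k_i}$, the diagonally embedded copy $\{(J',\ldots,J')\}\cong\sieg_1$ through $(J_i,\ldots,J_i)$; by Proposition \ref{prop1}(1) each torus $T_i$ fixes the single point $(\tau_i,\ldots,\tau_i)$. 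Reassembling the blocks identifies the orbit with the product \eqref{uniformizz} via precisely the diagonal map $\Phi$, and by Theorem \ref{deba} this orbit is the symmetric space uniformizing $\Z$.

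The step that needs the most care is this last orbit computation: the content is not the abstract product-of-orbits principle but the bookkeeping forced by Remark \ref{diagonally}, namely that each simple factor $\Sp(2,\R)$ acts diagonally on $V_i^{k_i}$, so that its orbit is a single diagonally embedded $\sieg_1\subset\sieg_1^{k_i}$ rather than the full $\sieg_{k_i}$, and that matching the resulting product description with the explicit diagonal tuple defining $\Phi$ recovers exactly the map in the statement. Everything else is a direct appeal to the results cited above.
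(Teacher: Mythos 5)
Your proposal is correct and follows essentially the same route as the paper's own proof: Imai's product formula (Proposition \ref{prop2}) combined with Remark \ref{diagonally} to obtain the product decomposition of $\Hg(JC)$, Proposition \ref{prop1} applied factorwise to identify each orbit as $\sieg_1$ (moving, non-CM factor) or a point (fixed CM factor), and the diagonal bookkeeping to recover $\Phi$. You simply spell out in more detail the steps the paper compresses into a few lines, in particular why the varying factors are generically non-CM and why each $\Sp(2,\R)$-orbit is a diagonally embedded $\sieg_1\subset\sieg_1^{k_i}$ rather than all of $\sieg_{k_i}$.
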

\begin{proof}
	By Proposition \ref{prop2} we have $
	\Hg(JC)=\Hg(E_1)\times \dots\times \Hg(E_r)\times\Hg(\bar E_{r+1})\times \dots\times \Hg(\bar E_n)$. Hence, the orbit is isomorphic to $\Hg(JC)_{\R}.JC\cong\Hg(E_1)_{\R}.E_1\times \dots\times \Hg(E_r)_{\R}.E_r\times\Hg(\bar E_{r+1})_{\R}.\bar E_{r+1}\times \dots\times \Hg(\bar E_n)_{\R}.\bar E_n$. By Proposition \ref{prop1} we get \eqref{uniformizz}. The isomorphism is provided by combining this with Remark \ref{diagonally}.
\end{proof}
\textbf{Notation:} From now on we will refer to the orbit space $\Sp(2, \R).E_1\times \dots\times \Sp(2, \R).E_r\times\Sp(2, \R).\bar E_{r+1}\times \dots\times \Sp(2, \R).\bar E_n$ as $\underbrace{\sieg_1\times\dots \times\sieg_1}_{g \text{ times}} $.  
\begin{say}
	When $G$ is abelian, Proposition \ref{riassuntino}, (1) and Proposition \ref{prop} provide two characterizations of the generic Hodge group of the family $f:\mathcal{C}\rightarrow \B$. Now we compare them. 
	
	
\end{say}
\begin{prop}\label{compare}
	When $G$ is abelian, the two decompositions of the generic Hodge group given in Proposition \ref{riassuntino}, (1) and in Proposition \ref{prop} coincide. In particular, $r=s$ and  $\Hg_{\R}^{ad}= Q_1\times\cdots\times Q_r$ with
	\begin{gather}\label{hdgE0}
		Q_i=\Hdg^{ad}(E_i)_{\R}\cong\Mon^{0,ad}(\chi)=SU(1,1),
	\end{gather}
	where $\chi$ is one of the irreducible $\C$-representations of $G$. 
\end{prop}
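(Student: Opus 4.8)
The plan is to notice that the hypotheses collected in \ref{totdec} place us simultaneously in the setting of Proposition \ref{riassuntino} and of Proposition \ref{prop}: the family is one of abelian coverings of the line with $\Z=S_f$ (because $\Z$ is special), and $JC$ is totally decomposable. Both propositions describe one and the same object, namely the generic Hodge group $\Hg=\Hg(JC)$ of the family at the generic point $x=[JC]\in\Z$. The strategy is therefore to pass to the real \emph{adjoint} group in both descriptions and to invoke the uniqueness of the decomposition of a semisimple group into its $\R$-simple factors.

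First I would unwind Proposition \ref{prop} at the level of adjoint groups. There $\Hg(JC)_{\R}\cong\Sp(2,\R)^{r}\times T_{r+1}\times\dots\times T_n$, with the $T_i=(\mathbb{G}_m)_{\R}$ abelian; these tori form the central (radical) part of this reductive group, hence they die under the adjoint quotient, while each factor $\Sp(2,\R)$ descends to an $\R$-simple group of the same type as $SU(1,1)$ (recall $\Sp(2,\R)$, $SL(2,\R)$ and $SU(1,1)$ are $\R$-isogenous). Thus
\begin{gather*}
	\Hg(JC)_{\R}^{ad}\cong\underbrace{SU(1,1)^{ad}\times\cdots\times SU(1,1)^{ad}}_{r \text{ times}},
\end{gather*}
a product of $r$ copies of one fixed $\R$-simple adjoint group, whose $i$-th factor is precisely $\Hdg^{ad}(E_i)_{\R}$ for the non-CM elliptic factor $E_i$ of \eqref{splitting}.

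Next I would compare this with the decomposition $\Hg_{\R}^{ad}=Q_1\times\cdots\times Q_s$ of Proposition \ref{riassuntino}(1), where each $Q_j=\Mon^{0,ad}(\chi)=\SU(m_\chi,m_{\bar\chi})^{ad}$ for some $\chi$. Since the $\R$-simple factors of a semisimple group are unique up to order (the Lie algebra splits uniquely into simple ideals), the two decompositions must coincide, forcing $s=r$ and, after reindexing, $Q_i\cong SU(1,1)^{ad}$ for every $i$. To pin down $(m_\chi,m_{\bar\chi})$ I would then argue by type: $\SU(m_\chi,m_{\bar\chi})^{ad}$ is simple of type $A_{m_\chi+m_{\bar\chi}-1}$, so agreement with $SU(1,1)^{ad}$ of type $A_1$ (together with $m_\chi m_{\bar\chi}\neq0$, hence $m_\chi,m_{\bar\chi}\geq1$) forces $(m_\chi,m_{\bar\chi})=(1,1)$ and $\SU(m_\chi,m_{\bar\chi})=SU(1,1)$ on the nose. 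Assembling the two identifications of $Q_i$ yields \eqref{hdgE0}.

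The main obstacle is not computational but conceptual: one must be certain that the two propositions describe the very same group, so that uniqueness of the simple decomposition can legitimately be applied. This is exactly where the genericity of $x=[JC]$ and the assumption that $\Z$ is special enter — they guarantee that $\Hg(JC)$ is the generic Hodge group $\Hg$ of the VHS and that $\Mon^0=\Hdg^{der}$, so that the monodromy factors $\SU(m_\chi,m_{\bar\chi})$ genuinely exhaust the $\R$-simple factors of $\Hdg^{ad}_{\R}$ (Proposition \ref{riassuntino}). Once this identification is secure, matching the factors and reading off that each is $SU(1,1)$ is routine.
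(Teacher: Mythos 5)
Your proposal is correct and follows essentially the same route as the paper: both identify the two descriptions as descriptions of the one generic Hodge group (using that $\Z$ special gives $\Mon^0=\Hdg^{der}$ and that the adjoint quotient kills the CM tori), and then match the $\R$-simple factors to conclude $r=s$ and $Q_i=SU(1,1)$. Your added remarks on the uniqueness of the decomposition into simple ideals and the type-$A_1$ argument pinning down $(m_\chi,m_{\bar\chi})=(1,1)$ are just a more explicit rendering of the paper's final comparison step.
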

\begin{proof}
By hypothesis, the family  $f:\mathcal{C}\rightarrow \B$ is a totally decomposable family of $G$-covers of the line such that $G$ is abelian and $\Z$ is special. Hence, it satisfies the assumptions of both Proposition \ref{riassuntino} and Proposition \ref{prop}. As a consequence, the statement follows.
For the reader convenience, we revisit the argument.
	As recalled in section \ref{vertaasoluta}, when $\Z$ is special, by Andr\'e's theorem we have that $\Mon^0=\Hdg^{der}$, where $\Hdg$ is the generic Hodge group of the family. By Proposition \ref{prop2}, we get $\Mon^0=\Hdg^{der}=\Hg(E_1)^{der}\times \dots\times\Hg(E_r)^{der}\times\Hg(\bar E_{r+1})^{der}\times \dots\times \Hg(\bar E_n)^{der}$. Hence $\Mon^0=\Hdg^{der}=\Hg(E_1)^{der}\times \dots\times\Hg(E_r)^{der}$. It follows that $\Mon^{0,ad}_{\R}=\Hdg^{der,ad}_{\R}=\Hg(E_1)^{ad}_{\R}\times \dots\times\Hg(E_r)^{ad}_{\R}$. This is the decomposition in Proposition \ref{prop}. Comparing with the decomposition in Proposition \ref{riassuntino}, (1), we conclude that $Q_i=\Sp(2, \R)=\SU(1,1)$. More precisely, the Hodge group $\Hg(E_i)^{ad}_{\R}$ of each moving elliptic curve $E_i$ is one of the $\Mon^{0,ad}(\chi)$'s appearing in Proposition \ref{riassuntino}, (1).
\end{proof}

\begin{say}
    We now consider the PEL Shimura variety $S(G)$ containing $\Z$. By Proposition \ref{prop}, we know that the symmetric space uniformizing $\Z$ is contained in $\sieg_1^g$. In the following proposition, we show that the same holds for the uniformizer of $S(G).$
\end{say}
\begin{prop}\label{lemma inclusioni}
    Let $JC\in\Z$ be the generic element of the family. Let  $M(G)$ be the  uniformizer of the PEL shimura variety $S(G)$ passing throught $JC$. Then, the following inclusions hold:
	\begin{gather}\label{eq:inclusion}
		\Hg(JC)_{\R}.JC\subseteq M(G)\subseteq\underbrace{\sieg_1\times\dots \times\sieg_1}_{g \text{ times}}.
	\end{gather}
\begin{proof}
By Proposition \ref{prop}, we have that $\Hg(JC)_{\R}.JC\subseteq \underbrace{\sieg_1\times\dots \times\sieg_1}_{g \text{ times}}$. In order to prove the statement, we need to show that the analogous inclusion holds for $M(G)$ as well. Observe indeed that the first inclusion in \eqref{eq:inclusion} follows directly from the definitions.

By Proposition \ref{compare}, we have that $Q_i=SU(1,1)$ of all $i$. Hence, by \eqref{unifo fam e pel}, we have that the uniformizer $M(G)$ of the PEL shimura variety $S(G)$ passing thought $JC$ satisfies
	 \begin{gather}\label{unifo}
	 	\Hg(JC)_{\R}.JC=\underbrace{\sieg_1\times\dots \times\sieg_1}_{r \text{ times}} \subseteq M(G)=\operatorname{H}.JC \cong\underbrace{\sieg_1\times\dots \times\sieg_1}_{n \text{ times}},
	 \end{gather}
where $\operatorname{H}$ is the generic Hodge group of the PEL subvariety $S(G)$. If $n=r$ then $\Hg(JC)_{\R}.JC=\operatorname{H}.JC$, thus the inclusion is obvious. Otherwise, $r<n$. For simplicity we can assume $n=r+1$. By \eqref{funzione} in Proposition \ref{graph}, the injection works as follows: 
\begin{equation}\label{grafico grande}
    \underbrace{\sieg_1\times\dots \times\sieg_1}_{r \text{ times}}\hookrightarrow \underbrace{\sieg_1\times\dots \times\sieg_1}_{r+1 \text{ times}}: (\tau_1, \ldots, \tau_r)\mapsto (\tau_1, \ldots, \tau_r, h(\tau_r)),
\end{equation}
for an isometry $h: \sieg_1\ra \sieg_1$.  In order to describe $M(G)$, let us look at the orbit $\operatorname{H}.JC$ which, by \eqref{grafico grande}, is provided by $\operatorname{H}.( \tau_1, \ldots, \tau_r, h(\tau_r))= \Sp(2, \R).\tau_1 \times \ldots\times\Sp(2, \R).\tau_r\times \Sp(2, \R). h(\tau_r) $.   
The splitting of $M(G)$ as the product of $r+1$ copies of $\sieg_1$ yields a corresponding decomposition (up to isogeny) of the generic element of the PEL as $A_{\tau_1}\times \ldots \times A_{\tau_{r+1}}$. In particular, we obtain $JC\sim B_{\tau_1}\times \ldots \times B_{h(\tau_{r})} $. Since, a priori, $JC$ is not generic in $S(G)$, we have that 
\begin{gather}\label{hodgeBi}
    \Hg(B_i)\subseteq \Hg(A_i).
\end{gather}
On the other hand, since $JC$ is totally decomposable, so are the $B_{\tau_i}'$s, namely they decompose as the product $ \prod E_{i_j}$ of certain of the $E_{i}'$s appearing in \eqref{splitting}. Hence, by Proposition \ref{prop2}, the inclusion \eqref{hodgeBi} gives
\begin{gather}
   \prod \Hg(E_{i_j})= \Hg(B_i)\subseteq \Hg(A_i)=\Sp(2, \R).
\end{gather}
For every $i$ and $j$, $\Hg(E_{i_j})$ is either $\Sp(2,\R)$ or 
$(\mathbb{G}_m)_{\R}$, which forces $B_i$ to be precisely one of the elliptic curves $E_{i_j}$. Therefore we have that $\sieg_1=\Sp(2, \R).B_{\tau_i}=\Hg(A_i). E_{i_j}$. Since this happens for every $i$, this shows that $M(G)$ sits in $\sieg_1^g$. 
\end{proof}
\end{prop}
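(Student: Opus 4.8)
The plan is to prove the two inclusions of \eqref{eq:inclusion} in turn. The left-hand inclusion $\Hg(JC)_{\R}.JC\subseteq M(G)$ is the easy one: since $\Z$ is special we have $\Z=S_f\subseteq S(G)$ (cf. \ref{sfSG}), and by Theorem \ref{deba} the orbit $\Hg(JC)_{\R}.JC$ is exactly the uniformizer of $S_f$ sitting inside the uniformizer $M(G)$ of $S(G)$. So the whole point is the right-hand inclusion $M(G)\subseteq\sieg_1^g$.

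First I would assemble the structural input. As $G$ is abelian and $\Z$ is both special and totally decomposable, Proposition \ref{compare} applies and gives $\Hg_{\R}^{ad}=Q_1\times\cdots\times Q_r$ with $Q_i\cong SU(1,1)$ for every $i$. The hypotheses of Proposition \ref{graph} are thus satisfied, so both uniformizers are products of $\sieg_1$'s, namely $\Hg(JC)_{\R}.JC\cong\sieg_1^r\hookrightarrow\sieg_1^n\cong M(G)$ with $n\geq r$, the embedding being of the form \eqref{unifo fam e pel}, with the possible presence of graph factors as in \eqref{funzione}. If $n=r$ the two uniformizers coincide and the inclusion $M(G)\subseteq\sieg_1^g$ is already furnished by Proposition \ref{prop}; the real case to treat is $n>r$, and for clarity I would reduce to $n=r+1$, where a single extra $\sieg_1$ is glued in as the graph of an isometry $h$.

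The key step is to read off the $n$ factors of $M(G)$ geometrically. The decomposition $M(G)\cong\sieg_1^n$ corresponds, up to isogeny, to a splitting $A_{\tau_1}\times\cdots\times A_{\tau_n}$ of the generic member of $S(G)$, each $A_{\tau_i}$ being uniformized by one $\sieg_1$ and hence having $\Hg(A_{\tau_i})_{\R}=\Sp(2,\R)$. Since $JC$ is a (non-generic) point of $S(G)$, the corresponding factor $B_i$ of $JC$ obeys $\Hg(B_i)\subseteq\Hg(A_{\tau_i})=\Sp(2,\R)$. Now total decomposability of $JC$ forces each $B_i$ to be isogenous to a product $\prod_j E_{i_j}$ of the elliptic curves occurring in \eqref{splitting}; by Proposition \ref{prop2} together with Remark \ref{diagonally} one has $\Hg(B_i)=\prod_j\Hg(E_{i_j})$, a product in which every factor is either $\Sp(2,\R)$ or $(\mathbb{G}_m)_{\R}$. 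Containment of this product in a single $\Sp(2,\R)$ then forces $B_i$ to be (a power of) just one elliptic curve $E_{i_j}$, so that $\Sp(2,\R).B_i=\Hg(A_{\tau_i})_{\R}.E_{i_j}=\sieg_1$ is one of the $g$ diagonal blocks. Letting $i$ range over all factors exhibits $M(G)$ inside $\sieg_1^g$.

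I expect the main obstacle to be exactly this last identification, i.e.\ excluding the possibility that a factor $A_{\tau_i}$ of the generic PEL member specializes, at $JC$, to a genuine product of several moving non-isogenous elliptic curves rather than to a single one. The delicate point is that the graph gluing \eqref{funzione} a priori mixes two $\sieg_1$-directions, so one must be certain that the Hodge-group inequality $\Hg(B_i)\subseteq\Sp(2,\R)$ really collapses $B_i$ to one elliptic factor. This is precisely where the hypothesis that no two $E_i$ are isogenous enters through Proposition \ref{prop2}: a true product of distinct moving elliptic curves would carry a Hodge group strictly larger than $\Sp(2,\R)$, so the bound leaves room for only one such factor.
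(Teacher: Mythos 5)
Your proposal is correct and follows essentially the same route as the paper's proof: both reduce to the case $n=r+1$ via Propositions \ref{compare} and \ref{graph}, read the factors of $M(G)\cong\sieg_1^n$ as a splitting $A_{\tau_1}\times\cdots\times A_{\tau_n}$ of the generic PEL member, and use $\Hg(B_i)\subseteq\Hg(A_{\tau_i})=\Sp(2,\R)$ together with Proposition \ref{prop2} to collapse each $B_i$ to a single elliptic factor. The only (harmless) refinement is that you phrase the conclusion as $B_i$ being a power of one $E_{i_j}$ and explicitly cite Remark \ref{diagonally}, which is slightly more precise than the paper's wording.
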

Now we are ready to show the necessity of the condition \eqref{star} for a family $\mathcal{C}\ra \B$ of totally decomposable abelian covers of the line to be special. \\

\textbf{Proof of Theorem A.} 
Let $[JC]\in\Z$ be a generic point.  By Proposition \ref{lemma inclusioni}, we have that the uniformizer $M(G)$ of the PEL shimura variety $S(G)$ passing throught $JC$ satisfies
	\begin{gather*}
		\Hg(JC)_{\R}.JC\subseteq M(G)\subseteq\underbrace{\sieg_1\times\dots \times\sieg_1}_{g \text{ times}}.
	\end{gather*}
    By Proposition \ref{totdec}, we have that $\Hg(JC)_{\R}.JC$ is the sublocus of $\sieg_1^g$, contained in $M(G)$, parametrized by
\begin{gather*} 
		\{( \underbrace{J_1,\ldots,J_1}_{k_1\  \text{times}},\ldots,\underbrace{J_r,\ldots,J_r}_{k_r\  \text{times}},\underbrace{\tau_{r+1},\ldots,\tau_{r+1}}_{k_{r+1}\  \text{times}} ,\ldots, \underbrace{\tau_n,\ldots,\tau_n}_{k_n\  \text{times}} ) \},
	\end{gather*}
where $\tau_i\in\sieg_1$ are the fixed CM elliptic curves.
By Proposition \ref{graph}, we have that $\Z=S_f\subsetneq S(G)$ if and only if there exists an isometry $h:\sieg_1\ra \sieg_1$  such that  one of the $\sieg_1$'s in $\Hg(JC)_{\R}.JC$ maps into $M(G)$ as the graph of $h$. In other words, in the parametrization of $\Hg(JC)_{\R}.JC$ in $M(G)\subset \sieg_1^g$ appears
\begin{gather*} 
		\{(\ldots, \underbrace{J_i,\ldots,J_i}_{k_i\  \text{times}}, \underbrace{h(J_i),\ldots,h(J_i)}_{k_i\  \text{times}},\ldots) \}.
	\end{gather*}
But this is a contradiction, as these describe two different parametrizations of $\Hg(JC)_{\R}.JC$ inside $\sieg_1^g$.
\qed
\

\begin{say}
	We make some observations regarding the non-abelian case. If $G$ is not abelian, then Proposition \ref{mohajerzuo} does not hold and hence the monodromy groups are not well characterized. Also the PEL subvariety $S(G)$ is more difficult to describe. In the totally decomposable case, we can at least conclude the following.
\end{say}
\begin{prop}
	Let $\Z$ be a totally decomposable subvariety. If $m_{\chi}m_{\bar{\chi}}\neq0$, then $\Mon^0(\chi)$ is isomorphic to $SU(1,1)$ or a product of $SU(1,1)$.
\end{prop}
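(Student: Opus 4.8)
The plan is to reduce the non-abelian statement to the totally decomposable structure of the Jacobian, exploiting that the monodromy group $\Mon^0$ is a normal subgroup of the Hodge group $\Hdg$ (by Andr\'e's theorem) together with the product decomposition of $\Hdg$ furnished by Imai's Proposition \ref{prop2}. First I would take the generic point $[JC]\in\Z$ and use total decomposability to write $JC\sim E_1^{k_1}\times\dots\times E_n^{k_n}$ with the $E_i$ pairwise non-isogenous. By Proposition \ref{prop2} we then have $\Hg(JC)=\prod_i\Hg(E_i)$, and since each $E_i$ is an elliptic curve, each factor $\Hg(E_i)_{\R}$ is either $\Sp(2,\R)\cong SU(1,1)$ (when $E_i$ is moving) or a torus (when $E_i$ has CM), by Proposition \ref{prop1}. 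Consequently the adjoint derived group $\Hdg_{\R}^{der,ad}$ is a product of copies of $SU(1,1)$, one for each moving factor.

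Next I would bring in the $\chi$-decomposition. Since $\Mon^0$ is a semi-simple normal subgroup of $\Hdg$, its adjoint form $\Mon^{0,ad}_{\R}$ is a subproduct of the $SU(1,1)$-factors coming from the moving elliptic curves. The key point is to control the projection $\Mon^{0,ad}(\chi)$ of $\Mon^{0,ad}_{\R}$ onto a given $\chi$-factor of the decomposition \eqref{decomposizione SG}. Because $\Mon^{0,ad}_{\R}$ is itself a product of simple $SU(1,1)$'s, and projections of such a product onto a simple group are either trivial or surjective on each simple factor, the image $\Mon^0(\chi)$ is a quotient of a product of $SU(1,1)$'s mapping into $\Sp(H^1(C_b,\R)_\chi)^G$. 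I would argue that the image is again a product of $SU(1,1)$'s: each simple factor of $\Mon^{0,ad}_{\R}$ maps either trivially or isomorphically (as $SU(1,1)$ is simple, a nontrivial homomorphism from it is injective) into the $\chi$-component, so the projection $\Mon^0(\chi)$ is generated by copies of $SU(1,1)$ and is therefore either $SU(1,1)$ or a product of $SU(1,1)$'s. The hypothesis $m_\chi m_{\bar\chi}\neq 0$ guarantees that the $\chi$-component is noncompact, so that at least one factor maps nontrivially and $\Mon^0(\chi)$ is not the trivial group.

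The main obstacle will be ensuring that the image $\Mon^0(\chi)$ really is a \emph{product} of $SU(1,1)$'s rather than some other real form appearing inside $\U(m_\chi,m_{\bar\chi})$, since a priori the target $\Sp(H^1(C_b,\R)_\chi)^G\cong\U(m_\chi,m_{\bar\chi})$ could contain higher-rank simple factors when $m_\chi$ or $m_{\bar\chi}$ is large. The resolution is that the source $\Mon^{0,ad}_{\R}$ has \emph{no} such higher-rank factors—every simple factor is $SU(1,1)$—and a surjection of algebraic groups cannot increase the set of simple factor types, so the image decomposes as a product of quotients of $SU(1,1)$'s, each of which is again $SU(1,1)$ by simplicity. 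I would phrase this cleanly in terms of the decomposition $\Mon^{0,ad}_{\R}\hookrightarrow\prod_{m_\chi m_{\bar\chi}\neq 0}\Mon^{0,ad}(\chi)$ with surjective projections (the analogue of \eqref{prodmon}, which only used the normality argument of \ref{vertaasoluta} and not the abelian hypothesis directly), so that $\Mon^{0,ad}(\chi)$ is a quotient of the $SU(1,1)$-product $\Mon^{0,ad}_{\R}$ and hence is itself isomorphic to $SU(1,1)$ or to a product thereof, as claimed.
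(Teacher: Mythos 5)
Your argument is correct and follows essentially the same route as the paper: use Propositions \ref{prop1} and \ref{prop2} to see that $\Hg^{ad}_{\R}$ (hence, by normality, $\Mon^{0,ad}_{\R}$) is a product of copies of $SU(1,1)$, then observe that the surjection onto the $\chi$-factor forces each simple factor of $\Mon^{0,ad}(\chi)$ to be a copy of $SU(1,1)$. The only cosmetic difference is that the paper phrases the last step via normality of the projected $Q_i$'s inside $\Mon^{0,ad}(\chi)$, while you phrase it via quotients of products of simple groups being subproducts; these are the same argument.
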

\begin{proof}
	By Propositions \ref{prop1} and \ref{prop2}, $\Hg^{ad}_{\R}$ is a product of $Q_i=SU(1,1)$. As in \ref{vertaasoluta}, we have that $\prod Q_i\subseteq \prod_\chi \Mon^{0,ad}(\chi) $. By definition, $ \Mon^{0,ad}(\chi) $ is at least semisimple, since $\Hg^{ad}_{\R}$ surjects onto it. 
	Thus we can write it as product of simple factors $\Mon^{0,ad}(\chi)=\prod F_j$. As $Q_i$ is normal in $\Hg^{ad}_{\R}$, its projection is normal as well. Hence we get $Q_i=SU(1,1)=F_j$.
\end{proof}

\subsection{Comparison with previous results}\label{previousres}
As explained in the Introduction \S \ref{casi trattati}, the condition \eqref{star} has been considered by various authors and, in the case of families of abelian coverings of the line, there are some results that prove its necessity under extra conditions on the eigenspaces of the family. In the following we comment on our result in relation with them.

\begin{say}
	Let us start by recalling what is already known about the necessity of \eqref{star} in the case of abelian coverings of the line. 
\end{say}

\begin{teo}[Mohajer and Zuo, Theorem 6.2 \cite{mohajer-zuo-paa}]
	Let $f:\mathcal{C}\rightarrow \B$ be a one-dimensional family of abelian coverings of the line. Then $\Z\subset \A_g$ is special if and only if \eqref{star} holds.	
\end{teo}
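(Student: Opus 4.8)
The ``if'' direction is immediate and is already recorded in \ref{condizionestar}: the equality of dimensions forces $\Z=S(G)$, which is special. So I would concentrate on the converse, which by \ref{sfSG} amounts to showing that, for a one-dimensional family, $\Z$ special implies $S_f=S(G)$. Assume then $\Z=S_f$ with $\dim\Z=1$. By Proposition \ref{riassuntino}(1) we have $\Hg_\R^{ad}=Q_1\times\cdots\times Q_s$ with every $Q_i=\Mon^{0,ad}(\chi)=\SU(m_\chi,m_{\bar\chi})$ non-compact, and $\dim S_f=\sum_i\delta(Q_i)$. Since $\delta(\SU(p,q))=pq\ge 1$ for each such factor, the equality $\dim S_f=1$ forces a single factor equal to $\SU(1,1)$, so $\Hg_\R^{ad}=\SU(1,1)$. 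We are then exactly in the setting of Proposition \ref{graph}: writing $M_f=\sieg_1\hookrightarrow\sieg_1^n=M(G)$, Proposition \ref{riassuntino}(2) shows that each eigenfactor with $m_\chi m_{\bar\chi}\neq0$ is itself $\SU(1,1)$, i.e. $(m_\chi,m_{\bar\chi})=(1,1)$, and that $\Z=S_f\subsetneq S(G)$ precisely when $n\ge 2$, in which case the single $\SU(1,1)$ embeds into two of the eigenfactors as the graph of an isometry. It therefore suffices to exclude this graph situation, i.e. to prove $n=1$.

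To do this I would first restate the graph condition in terms of local systems. If the single $\SU(1,1)$ maps diagonally into the factors indexed by two \emph{distinct} real representations $(\chi_1,\bar\chi_1)\neq(\chi_2,\bar\chi_2)$, then the two period maps $B\to\sieg_1$ attached to $\V_{\chi_1}$ and $\V_{\chi_2}$ differ by an isometry of $\sieg_1$; equivalently, the projectivised rank-two local systems $\mathbb{P}\V_{\chi_1}$ and $\mathbb{P}\V_{\chi_2}$ are isomorphic (one replaces $\chi_2$ by $\bar\chi_2$, which lies in the same real factor, if the isometry is anti-holomorphic). So the graph situation cannot occur as soon as no two eigen-local systems attached to distinct real characters are projectively isomorphic. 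Here one-dimensionality enters decisively: the base $B$ parametrises four branch points up to projective equivalence, so $B\subseteq\PP-\{0,1,\infty\}$ is an open part of the $\lambda$-line, and each $\V_\chi$ with $(m_\chi,m_{\bar\chi})=(1,1)$ is a rank-two local system on $\PP-\{0,1,\infty\}$. Its monodromy $\Mon^0(\chi)=\SU(1,1)$ is Zariski-dense in $SL_2$, so the local system is irreducible, hence \emph{rigid}: its isomorphism class is determined by the three local monodromy conjugacy classes at $0,1,\infty$.

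The plan is then to compute these local monodromies directly from the monodromy datum. At each of $0,1,\infty$ two of the four branch points collide, and the corresponding local monodromy of $\V_\chi$ is the quasi-unipotent operator read off from $\chi$ evaluated on the product of the two colliding monodromy elements, with eigenvalues governed by fractional parts of the form $\langle\chi(\theta_i\theta_j)\rangle$. Comparing these data for $\chi_1$ and $\chi_2$ at all three punctures and invoking rigidity, coincidence of the two local systems should force $\chi_2\in\{\chi_1,\bar\chi_1\}$, i.e. $(\chi_1,\bar\chi_1)=(\chi_2,\bar\chi_2)$ --- a contradiction. Hence $n=1$, so $S_f=S(G)$ and \eqref{star} holds. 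The main obstacle is exactly this last step: proving that the assignment $\chi\mapsto(\text{local monodromies at }0,1,\infty)$ separates characters up to $\chi\leftrightarrow\bar\chi$, which is the number-theoretic core of the statement. It is precisely this point that Mohajer and Zuo bypass via reduction modulo $p$, controlling the Frobenius on the eigen-pieces; along the way one must also treat by hand the degenerate configurations of the $\theta_i$ and the cases where a local monodromy fails to be regular semisimple, so that the rigidity input genuinely applies.
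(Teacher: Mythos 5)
First, a point of comparison: the paper does not prove this statement. It is quoted from Mohajer--Zuo, and \S\ref{casi trattati} records that their proof goes through reduction modulo $p$. Your reduction of the problem is nevertheless correct and matches the characteristic-zero machinery of Section \ref{smallest special}: the ``if'' direction is \ref{condizionestar}; for the converse, $\dim S_f=1$ together with Proposition \ref{riassuntino} forces $\Hdg_\R^{ad}\cong\SU(1,1)$ with a single simple factor, surjectivity of the projections gives $(m_\chi,m_{\bar\chi})=(1,1)$ for every contributing character, and Proposition \ref{graph} reduces everything to excluding the graph of an isometry between two eigenfactors $\sieg_1$. Up to there the argument is sound and is essentially the same skeleton the paper uses for Theorem \ref{ThmA}.

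The gap is the step you yourself flag as ``the main obstacle'': showing that for $\chi_1\neq\chi_2,\bar\chi_2$ the two eigen-local systems on the $\lambda$-line cannot become isomorphic after projectivisation. This is not a loose end to be tidied up --- it carries the entire content of the theorem, and nothing in the sketch makes it more tractable than the original statement. Moreover the rigidity framework does not obviously close it: an isometry of $\sieg_1$ only identifies the two monodromy representations inside $\mathrm{PSL}_2(\R)$, so the rank-two systems need only agree up to a twist by a rank-one local system, and rigidity then constrains only the \emph{ratios} of local eigenvalues at $0,1,\infty$; that these ratios (differences of fractional parts built from $\chi(\theta_i\theta_j)$) separate characters up to conjugation is precisely the unproved arithmetic claim, and one must separately dispose of scalar local monodromies, for which rigidity in your form fails. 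Note that distinct characters routinely produce identical groups $\SU(m_\chi,m_{\bar\chi})$ --- here \emph{every} contributing character gives $\SU(1,1)$ --- so the separation must come from finer invariants than anything established in Sections \ref{Gcurves}--\ref{smallest special}. In short, you have correctly reduced the theorem to a concrete claim that is equivalent in difficulty to the theorem itself; to finish one must either prove that claim directly (it is exactly here that Mohajer--Zuo substitute the Frobenius/mod-$p$ argument) or inject additional geometric input about the fibres, as the paper does in the totally decomposable case via Propositions \ref{prop2} and \ref{prop}.
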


\begin{teo}[Mohajer, Theorem 5.2 \cite{Moha}]\label{Moha}
	Let $f:\mathcal{C}\rightarrow \B$ be a $r-$dimensional family of abelian coverings of the line. If there exists $\chi\in X(G)$ such that $\{m_{\chi}, m_{\bar{\chi}}\}\neq \{0, r+1\}$ and $m_{\chi}+m_{\bar{\chi}}\geq r+1$, then $\Z\subset \A_g$ is special if and only if \eqref{star} holds.
\end{teo}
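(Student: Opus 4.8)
The plan is to prove only the nontrivial implication, that specialness of $\Z$ forces \eqref{star}; the converse is the sufficiency direction recalled in \S\ref{condizionestar}. Following \S\ref{sfSG}, I would first translate everything into the group-theoretic language of Section \ref{smallest special}: $\Z$ special means $\Z=S_f$, while \eqref{star} means $S_f=S(G)$. So, assuming $\Z=S_f$, the goal becomes to show $\Hdg_\R^{ad}=\prod_{m_\chi m_{\bar\chi}\neq 0}\SU(m_\chi,m_{\bar\chi})$, which by Proposition \ref{riassuntino}(3) is precisely the negation of the ``diagonal'' situation.

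Next I would extract the numerical content of the hypothesis. For an abelian cover of $\PP$ branched at $s$ points one has $r=\dim\B=s-3$, and for each character the quantity $m_\chi+m_{\bar\chi}$ equals $n(\chi)-2$, where $n(\chi)$ is the number of branch points at which $\chi$ is nontrivial; in particular $m_\chi+m_{\bar\chi}\leq s-2=r+1$. Hence the assumption $m_\chi+m_{\bar\chi}\geq r+1$ forces equality $m_\chi+m_{\bar\chi}=r+1$ (so $\chi$ is ramified at every branch point), while $\{m_\chi,m_{\bar\chi}\}\neq\{0,r+1\}$ forces $m_\chi,m_{\bar\chi}\geq 1$. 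Since $\Z=S_f$, Proposition \ref{riassuntino}(2) and \eqref{prodmon} give a surjection $\Hdg_\R^{ad}\twoheadrightarrow\SU(m_\chi,m_{\bar\chi})$, so $r=\dim S_f\geq \delta(\SU(m_\chi,m_{\bar\chi}))=m_\chi m_{\bar\chi}$. Under $m_\chi+m_{\bar\chi}=r+1$ with both parts positive, the product $m_\chi m_{\bar\chi}$ is minimized exactly at $\{m_\chi,m_{\bar\chi}\}=\{1,r\}$, where it equals $r$, and is strictly larger for every other partition. Thus if $\{m_\chi,m_{\bar\chi}\}\neq\{1,r\}$ we obtain $m_\chi m_{\bar\chi}>r=\dim S_f$, contradicting the surjection; then $\Z$ is not special and, since \eqref{star} also fails (it implies specialness), the equivalence holds vacuously.

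This leaves the critical case $\{m_\chi,m_{\bar\chi}\}=\{1,r\}$, where $\delta(\SU(1,r))=r=\dim S_f$. Here the surjection $\Hdg_\R^{ad}\twoheadrightarrow\SU(1,r)$ is dimension-preserving, so its kernel is compact and $\Hdg_\R^{ad}=\SU(1,r)$ is a single noncompact simple factor; equivalently, the family fills up the $r$-dimensional complex ball attached to $\chi$. By Proposition \ref{riassuntino}(3), condition \eqref{star} now fails precisely when some second character $\chi'\neq\chi,\bar\chi$ with $m_{\chi'}m_{\bar\chi'}\neq 0$ also satisfies $\SU(m_{\chi'},m_{\bar\chi'})\cong\SU(1,r)$, so that the single $\SU(1,r)$ embeds diagonally into $\SU(1,r)\times\SU(1,r)$ as the graph of an isometry (cf. Proposition \ref{graph}).

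Ruling out this diagonal is the main obstacle, and it cannot be resolved by dimension counting alone, since two distinct ramified characters may a priori produce isomorphic factors of the same signature. The plan here is to follow the reduction-modulo-$p$ technique of Moonen \cite{moonen-special} as adapted by Mohajer \cite{Moha}: being special (hence arithmetic), $\Z$ has good reduction at almost all primes $p$, and the de Rham cohomology of an abelian cover of $\PP$ splits into $\chi$-eigenpieces whose Hodge and conjugate filtrations, together with the Cartier operator (Hasse--Witt matrix), are explicitly governed by the local exponents of $\chi$ at the branch points. A diagonal identification of the $\chi$- and $\chi'$-factors would force these $F$-zip structures to coincide, which is incompatible with the distinct exponent data carried by $\chi$ and $\chi'$; this contradiction yields $S_f=S(G)$, that is, \eqref{star}. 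I expect the technical heart to lie exactly in isolating, from the crystalline/$p$-adic structure, the arithmetic invariant distinguishing the two eigenfactors and thereby obstructing the graph embedding.
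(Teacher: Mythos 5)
The paper does not actually prove this statement: Theorem \ref{Moha} is quoted from Mohajer \cite{Moha} purely for the comparison carried out in Section \ref{previousres}, so there is no internal proof to measure your attempt against. Judged on its own, the first half of your argument is sound and well organized: for a nontrivial character the identity $m_\chi+m_{\bar\chi}=\#\{i:\chi(\theta_i)\neq 1\}-2\leq s-2=r+1$ does force $m_\chi+m_{\bar\chi}=r+1$ and, with $\{m_\chi,m_{\bar\chi}\}\neq\{0,r+1\}$, both multiplicities positive; combining $\delta(\SU(m_\chi,m_{\bar\chi}))=m_\chi m_{\bar\chi}\geq r$ with $\dim S_f=\dim\Z=r$ (using, as in \ref{vertaasoluta}, that some simple factor of $\Mon^{0,ad}_\R$ maps isomorphically onto $\SU(m_\chi,m_{\bar\chi})$) correctly pins down $\{m_\chi,m_{\bar\chi}\}=\{1,r\}$ and $\Hdg^{ad}_\R\cong\SU(1,r)$, leaving only the graph situation of Proposition \ref{riassuntino}(3) to exclude.

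The gap is precisely that last step, which is the entire technical content of the theorem. You do not rule out the diagonal embedding; you only name the reduction-modulo-$p$ method and assert that a graph identification of the $\chi$- and $\chi'$-eigenpieces ``would force the $F$-zip structures to coincide, which is incompatible with the distinct exponent data''. That assertion is unproved and, as stated, not even plausible without substantial further input: a second character $\chi'$ can perfectly well carry the same signature $(1,r)$, and nothing in the Hodge-theoretic formalism of Sections \ref{smallest special}--\ref{proofmain} forbids the corresponding $\C$-sub-VHS from being isomorphic --- indeed, the whole point of Theorem \ref{ThmA} of this paper is that excluding such a graph requires a genuinely new argument (there, Imai's description of Hodge groups of products of elliptic curves), and the proofs of Moonen and Mohajer need the full machinery of ordinariness and Ekedahl--Oort/Newton strata for the reduction of a special subvariety, not merely the remark that two characters have different local exponents. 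Until you supply that positive-characteristic argument (or a substitute), the proof is incomplete exactly where the difficulty lies.
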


\begin{teo}[Frediani, Theorem 3.1 \cite{f}]\label{Paola}
	Let $f:\mathcal{C}\rightarrow \B$ be a family of abelian coverings of the line. If
	\begin{enumerate}
		\item either there exists $\chi\in X(G)$ with $\chi\neq \bar{\chi}$ such that $m_{\chi}\geq 2$ and $m_{\bar{\chi}}\geq 2$; 
		\item or there exists $\chi\in X(G)$ with $\chi= \bar{\chi}$ such that $m_{\chi}\geq 3$,
	\end{enumerate}
	then \eqref{star} does not hold and $\Z$ is not totally geodesic (hence it is not special).	
\end{teo}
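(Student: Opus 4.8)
The plan is to make \emph{$\Z$ is not totally geodesic} the main point: once this is shown, $\Z$ cannot be special, and since \eqref{star} would force $\Z$ to be special and hence totally geodesic, the failure of \eqref{star} follows automatically. Fix a general (hence non-hyperelliptic) fibre $C=C_b$ and work at $[JC]\in\Z$, using the eigenspace decomposition $H^0(C,K_C)=\bigoplus_\chi m_\chi V_\chi$ of \eqref{H0kC}. The cotangent space of $\ag$ at $[JC]$ is $S^2H^0(C,K_C)$, that of $\mg$ at $[C]$ is $H^0(C,2K_C)$, and the codifferential of the Torelli map is the multiplication $\mu\colon S^2H^0(C,K_C)\to H^0(C,2K_C)$, with kernel the space $I_2(K_C)$ of quadrics through the canonical curve. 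By Max Noether's theorem $\mu$ is surjective for non-hyperelliptic $C$, and since $G$-invariants are exact over $\C$, the invariant map $\mu^G$ is surjective with kernel $I_2(K_C)^G$. Comparing with the reading of \eqref{star} recalled in \ref{condizionestar}, I obtain that \eqref{star} holds if and only if $I_2(K_C)^G=0$; so the failure of \eqref{star} is equivalent to the existence of a nonzero $G$-invariant quadric through the canonical curve.

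Under hypothesis (1), with $\chi\neq\bar\chi$ and $m_\chi,m_{\bar\chi}\ge 2$, such a quadric is produced as follows. Pick independent $\omega_1,\omega_2$ in the $\chi$-summand and independent $\eta_1,\eta_2$ in the $\bar\chi$-summand; since $\chi\bar\chi$ is trivial, all four products $\omega_i\eta_j$ lie in $H^0(C,2K_C)^G$. The $(\chi,\bar\chi)$-block of $(S^2H^0(C,K_C))^G$ has dimension $m_\chi m_{\bar\chi}\ge 4$, whereas its image under $\mu^G$ sits inside the space of invariant bicanonical forms, which is governed by the branch data of $C\to\PP$; a dimension count then forces a linear relation $\sum_{i,j}c_{ij}\,\omega_i\eta_j=0$, i.e. a nonzero $Q\in I_2(K_C)^G$. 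Under hypothesis (2), with $\chi=\bar\chi$ and $m_\chi\ge 3$, the same role is played by the $\binom{m_\chi+1}{2}\ge 6$ symmetric products inside $S^2$ of the self-conjugate summand, and again a relation yields a nonzero $Q\in I_2(K_C)^G$. In either case \eqref{star} fails.

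To upgrade this to the non-geodesic statement I would invoke the second fundamental form $\seconda$ of the Torelli map (in the form of Colombo--Pirola--Tortora, surveyed in \cite{cfg}). It is a $G$-equivariant symmetric form whose transpose $\seconda^{t}\colon I_2(K_C)\to S^2H^0(C,2K_C)$ is given by an explicit expression in products of differentials. If $\Z$ were totally geodesic in $\ag$, its second fundamental form would vanish along $\Z$; as the tangent space of $\Z$ at $[JC]$ is the invariant subspace $H^1(C,T_C)^G$, this would force
\[
\seconda^{t}(Q)(\xi\otimes\xi)=0 \qquad\text{for all } Q\in I_2(K_C)^G \text{ and all } \xi\in H^1(C,T_C)^G .
\]
I would then evaluate $\seconda^{t}$ on the invariant quadric $Q$ built above against a suitable invariant Schiffer variation $\xi$: using the explicit formula together with a local computation of the chosen eigenforms near the ramification points, one checks $\seconda^{t}(Q)(\xi\otimes\xi)\neq 0$, contradicting total geodesy.

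The hard part is precisely this last non-vanishing. Producing $Q$ is a representation-theoretic count, but verifying $\seconda^{t}(Q)(\xi\otimes\xi)\neq 0$ needs the precise Colombo--Pirola--Tortora expression, a delicate analysis of the divisors of the eigenforms $\omega_i,\eta_j$ at the branch points, and the assumption that $C$ is non-hyperelliptic and non-trigonal so that $\seconda^{t}$ is injective on the relevant block. The hypotheses $m_\chi,m_{\bar\chi}\ge 2$ (resp. $m_\chi\ge 3$) enter twice: first to build $Q$, then to ensure enough independent eigenforms for the pairing to be non-degenerate. Once a single pair $(Q,\xi)$ with $\seconda^{t}(Q)(\xi\otimes\xi)\neq 0$ is exhibited, both conclusions of the theorem follow.
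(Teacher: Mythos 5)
This statement is imported verbatim from Frediani \cite{f} (Theorem 3.1 there) and is \emph{not proved in this paper}: it appears only in \S\ref{previousres} as background for comparing Theorem \ref{ThmA} with the literature. So there is no internal proof to measure your attempt against; what can be said is how your sketch compares with the known argument in the cited source, which it does in fact track in outline (reduction of \eqref{star} to $I_2(K_C)^G\neq 0$ via Noether's theorem and exactness of invariants, then non-vanishing of the second fundamental form $\seconda^t\colon I_2(K_C)\to S^2H^0(2K_C)$ on an invariant quadric paired with Schiffer variations).

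However, as a proof your proposal has two genuine gaps, both at the decisive points. First, the existence of a nonzero $Q\in I_2(K_C)^G$ is asserted by ``a dimension count then forces a linear relation,'' but no such count is given, and the one you gesture at does not work: a single pair $\chi\neq\bar\chi$ with $m_\chi m_{\bar\chi}\geq 4$ does not by itself force $\dim (S^2H^0(C,K_C))^G>\dim H^0(C,2K_C)^G$, since $\dim H^0(C,2K_C)^G=s-3$ can well exceed $4$. In the actual argument the invariant quadric is not obtained by counting but is written down explicitly, using the fact that eigendifferentials of an abelian cover of $\PP$ are of monomial type, so that suitable products $\omega_i\eta_j$ coincide as sections of $2K_C$ and a rank~$\leq 4$ relation in $I_2(K_C)^G$ can be exhibited; this is where the hypotheses $m_\chi,m_{\bar\chi}\geq 2$ (resp.\ $m_\chi\geq 3$) really enter. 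Second, the non-vanishing $\seconda^t(Q)(\xi_p\odot\xi_p)\neq 0$, which you correctly identify as ``the hard part,'' is the entire content of the theorem: it requires the explicit Colombo--Pirola--Tortora formula and a local analysis of the chosen eigenforms at the branch points, none of which is carried out. (There is also a small unaddressed caveat: your equivalence ``\eqref{star} holds iff $I_2(K_C)^G=0$'' uses surjectivity of the multiplication map, which fails for hyperelliptic fibres.) As it stands, the proposal is a correct road map to Frediani's proof rather than a proof.
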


To compare our result with the above ones, we consider our family $f:\mathcal{C}\rightarrow \B$ of totally decomposable abelian $G$-curves of the line.  
When $\Z$ is special, it follows by Proposition \ref{compare} that, for $\chi\in X(G)$, we either have $m_{\chi}m_{\bar{\chi}}=0$, or $m_{\chi}=m_{\bar{\chi}}=1$. Hence, obviously, the conditions in Theorem \ref{Paola} are not satisfied.
Moreover, when $r=\dim\Z\geq 2$, the condition on the eigenspaces in Theorem \ref{Moha} is never satisfied. Hence the case of totally decomposable abelian $G$-covers does not fall within the already studied cases. Hence Theorem \ref{ThmA} goes in the direction of completing the proof of the necessity of \eqref{star} in the abelian case.


\begin{thebibliography} {99}
	\bibitem{andre} Y. Andr\'e, \newblock {\em Mumford-Tate groups of mixed Hodge structures and the theorem of the fixed part.} \newblock Compositio Math. 82, (1992), 1-24.
	
	
	
	
	
	
	
	
	
	
	
	\bibitem{cfg} E.~Colombo, P.~Frediani, and A.~Ghigi, \newblock {\em On
		totally geodesic submanifolds in the {J}acobian locus.} \newblock
	Internat. J. Math. 26, (2015), no. 1, 1550005 -1550026.
	
	
	%
	\bibitem{cgp} D.~Conti, A.~Ghigi, and R.~Pignatelli,  \newblock {\em
		Some evidence for the Coleman-Oort conjecture.}  \newblock  RACSAM 116 (50), (2022).
	
	\bibitem{dm} P.~Deligne, G.~Mostow,  \newblock {\em
		Monodromy of hypergeometric functions and non-lattice integral monodromy.}  \newblock  Inst. Hautes \'Etudes Sci. Publ. Mat. 63, (1986), 5-89.
	
	
	%
	%
	%
	%
	
	
	\bibitem{dejong-noot} J.~de~Jong and R.~Noot, \newblock {\em
		Jacobians with complex multiplication.} \newblock Arithmetic algebraic geometry (Texel, 1989), volume 89 of Progr. Math., pages 177-192. Birkh{\"a}user
	Boston, Boston, MA, 1991.
	
	\bibitem{f} P.~Frediani, \newblock {\em Abelian covers and second fundamental form.}
	Preprint 2021. 	arXiv:2105.07947
	
	\bibitem{fgp} P.~Frediani, A.~Ghigi and M.~Penegini, \newblock {\em
		Shimura varieties in the Torelli locus via Galois coverings.}\newblock
	Int. Math. Res. Not. 20, (2015), 10595-10623.
	
	
	\bibitem{fpp} P.~Frediani, M.~Penegini and P.~Porru, \newblock {\em Shimura
		varieties in the Torelli locus via Galois coverings of elliptic
		curves.} \newblock  Geom. Dedicata 181, (2016), 177-192.
	
	
	%
	%
	%
	%
	%
	%
	\bibitem{fgs} P.~Frediani, A.~Ghigi and I.~Spelta, \newblock {\em
		Infinitely many Shimura varieties in the Jacobian locus for
		$g\leq 4$.} \newblock Ann. Sc. Norm. Super. Pisa Cl. Sci. (5) Vol. XXII, (2021), 1597-1619.
	%
	
	
	\bibitem{deba} A.~Ghigi, \newblock {\em On some
		differential-geometric
		aspects of the Torelli map.} \newblock Boll. Unione Mat. Ital.
	12 (1-2), (2019), 133-144.
	
	
	\bibitem{GT2} A.~Ghigi and C.~ Tamborini,  \newblock {\em A topological construction of families of Galois covers of the line},  \newblock To appear on Algebraic and Geometric Topology. {arXiv:2204.07817}. 
	
	%
	
	
	\bibitem{ganzdiez} G.~Gonz{\'a}lez~D{\'{\i}}ez and W.~J. Harvey.
	\newblock Moduli of {R}iemann surfaces with symmetry.  \newblock In
	{\em Discrete groups and geometry ({B}irmingham, 1991)}, volume 173
	of {\em London Math. Soc. Lecture Note Ser.}, pages
	75--93. Cambridge Univ. Press, Cambridge, 1992.
	
	
	\bibitem{gm1} S.~Grushevsky and M.~M\"oller, \newblock {\em
		Shimura curves within the locus of hyperelliptic {J}acobians
		in genus 3.} \newblock Int. Math. Res. Not.,
	(6), (2016), 1603-1639.
	%
	
	
	%
	
	\bibitem{imai} H.~Imai. \newblock {\em On the Hodge groups of some abelian varieties.} \newblock Kodai Math. Sem. Rep. 27, (1976), 367-372.
	
	%
	
	
	
	
	
	
	
	\bibitem{lz} X.~Lu and K.~Zuo, \newblock {\em The {O}ort conjecture on {S}himura curves in the {T}orelli locus of curves.}
	\newblock J. Math. Pures Appl. 123(9), (2019), 41-77.
	
	\bibitem{Moha} A.~Mohajer, \newblock {\em Shimura varieties and abelian covers of the line.}  \newblock Houston J. Math. 46, (2020), no. 4, 953-972.
	
	\bibitem{mohajer-zuo-paa} A.~Mohajer and K.~Zuo,  \newblock {\em On Shimura
		subvarieties generated by families of abelian covers of
		$\mathbb{P}^1$.}  J. Pure Appl. Algebra 222 (4), (2018),
	931-949.
	
	\bibitem{moonen-linearity-1} B.~Moonen, \newblock {\em Linearity
		properties of {S}himura varieties. {I}.} \newblock J. Algebraic
	Geom. 7(3), (1998), 539-567.
	
	
	
	\bibitem{moonen-special} B.~Moonen, \newblock {\em Special
		subvarieties arising from families of cyclic covers of the
		projective line.} \newblock Doc. Math. 15, (2010), 793-819.
	%
	\bibitem{moonen-oort} B.~Moonen and F.~Oort, \newblock {\em The
		{T}orelli locus and special subvarieties.} \newblock
	``{H}andbook of {M}{oduli: Volume II}'', G. Farkas and I. Morrison
	(eds.),  International {P}ress, Boston, (2013), 549-94.
	
	\bibitem{moonen-zar} B. Moonen and Y. Zarhin, \newblock {\em Hodge classes on abelian varieties of low dimension.} Math Ann 315, (1999),  711–733.
	
	\bibitem{mumford-Shimura} D.~Mumford, \newblock {\em A note of
		{S}himura's paper ``{D}iscontinuous groups and abelian
		varieties''.} \newblock Math. Ann. 181, (1969), 345-351.
	
	%
	%
	%
	
	
	
	

	
	\bibitem{rohde}	J.C. ~Rohde. {\em Cyclic coverings, Calabi-Yau Manifolds and Complex Multiplication.} Springer-Verlag Berlin Heidelberg (2009).
	
	
	
	\bibitem{ire tesi} I.~ Spelta, \emph{On Shimura subvarieties of the Torelli Locus and Ramified Prym Maps.} PhD Thesis, {https://iris.unipv.it/handle/11571/1367134}
	
	\bibitem{ire} I.~ Spelta, \newblock {\em Shimura subvarieties via endomorphisms. } Comptes Rendus Math. 360, (2022), 1117-1124.
	
	
	
	
	\bibitem{ct} C.~ Tamborini, \newblock {\em Symmetric spaces uniformizing Shimura varieties in the Torelli locus. } \newblock  {Annali di Matematica Pura ed Applicata 201,  (2022), 2101-2119.}
	
	
	
	%
	
	
	
	
	
	
	
	
	
	
	
	
	
	
\end{thebibliography}
\end{document}